\documentclass[10pt,leqno]{amsart}
\usepackage{graphicx}
\usepackage{indentfirst,csquotes}
\usepackage[indentafter]{titlesec}
\titleformat{name=\section}{}{\thetitle.}{0.8em}{\centering\scshape}
\titleformat{name=\subsection}[runin]{}{\thetitle.}{0.5em}{\bfseries}[.]
\titleformat{name=\subsubsection}[runin]{}{\thetitle.}{0.5em}{\itshape}[.]
\titleformat{name=\paragraph,numberless}[runin]{}{}{0em}{}[.]
\titlespacing{\paragraph}{0em}{0em}{0.5em}
\titleformat{name=\subparagraph,numberless}[runin]{}{}{0em}{}[.]
\titlespacing{\subparagraph}{0em}{0em}{0.5em}
\usepackage{lipsum}

\usepackage{amssymb,amsthm,amsmath}
\usepackage{xcolor,paralist,hyperref,titlesec,fancyhdr,etoolbox}
\usepackage[all]{xy}
\theoremstyle{definition}
\newtheorem{definition}{Definition}[]

\newtheorem{example}[definition]{Example}
\theoremstyle{plain}
\newtheorem{theorem}[definition]{Theorem}
\newtheorem{corollary}[definition]{Corollary}

\newtheorem{lemma}[definition]{Lemma}
\newtheorem{proposition}[definition]{Proposition}
\newtheorem{problem}[definition]{Problem}
\renewenvironment{proof}{\noindent\textsc{Proof.}\quad}{\qed}

\hypersetup{ colorlinks=true, linkcolor=black, filecolor=black, urlcolor=black }

\newcommand\ZZ{\mathop{}\!\mathbb{Z}}

\newcommand\CC{\mathop{}\!\mathbb{C}}
\newcommand\RR{\mathop{}\!\mathbb{R}}

\newcommand\GL{\mathop{}\!\mathrm{GL}}
\newcommand\Ort{\mathop{}\!\mathrm{O}}

\newcommand\tr{\mathop{}\!\mathrm{tr}}
\newcommand\mo{\mathop{}\!\mathrm{mod}}
\newcommand{\Bigperp}{\mathop{\vcenter{\hbox{\huge$\perp$}}}}

\begin{document}
\title{Conjugacy of Isometries in Real Orthogonal Groups}
\author[Ziyang ZHU]{Ziyang ZHU}
\date{\today}
\address{School of Mathematical Sciences, Capital Normal University, Beijing 100048, China}
\email{zhuziyang@cnu.edu.cn}
\maketitle

\let\thefootnote\relax
\footnotetext{MSC2020: 11E10, 15B10.} 

\begin{abstract}
We determine all orthogonal transformations of a quadratic space over reals such that any orthogonal transformation which is conjugate to one of them in the linear group is conjugate in the orthogonal group.
\end{abstract} 

\bigskip

\section{Introduction}\label{s1}
To investigate the topological cobordism groups of high-dimensional knots, Levine developed an algebraic cobordism theory based on matrices \cite{Le69a}. Given the striking similarity between the Alexander polynomial of a knot and the characteristic polynomial of an isometry on a quadratic space, Levine further extended this algebraic theory to the framework of isometric spaces \cite{Le69b}, following Milnor's systematic study of the characteristic polynomials and conjugacy classes of such isometries \cite{Mi69}. This approach eventually led to the determination of the general structure of topological cobordism groups for knots in high odd dimensions \cite[Theorem 24]{Le69b}. Conversely, there has been considerable research, such as \cite{Ta24}, on identifying the invariants of the spaces (often constructed via the local-global principle) from which a given polynomial might originate. Milnor \cite[Theorem 2.1]{Mi69} proved that two isometries of a quadratic space over a local field $k$ (with $\mathrm{char}(k)\neq2$) sharing the same irreducible minimal polynomial are conjugate in $\Ort(V)$, which was conjectured by Levine. It is natural to pose the following more general question.

\begin{problem}
Let $V$ be a non-degenerate quadratic space over a field $k$. Determine all orthogonal transformations $\sigma\in\Ort(V)$ such that any orthogonal transformation in $\Ort(V)$ which is conjugate to $\sigma$ in $\GL(V)$ is conjugate to $\sigma$ in $\Ort(V)$.
\end{problem}

When $k$ is a non-archimedean local field with $\mathrm{char}(k)\neq2$, the answer to the above problem is given by \cite[Theorem 1.2]{XZ26}. For archimedean local fields, the only interesting case is $k=\RR$. This is the case treated in the present paper. Throughout the subsequent discussion, we let $V$ be a non-degenerate real quadratic space.

Let $\sigma\in\Ort(V)$. Its characteristic polynomial $f\in\RR[x]$ is reciprocal in the sense that $f=f^*$, where $f^*(x):=f(0)^{-1}x^{\deg(f)}f(x^{-1})$. Write
\[f=\prod_{p\in I(f)}p^{e_p},\]
where $e_p\geq1$ and
\begin{itemize}
\item $I_0(f):=\{p\in\RR[x]\text{ reciprocal}:\deg(p)=1\text{ and }p|f\}\subseteq\{x+1,x-1\}$,

\item $I_1(f):=\{p\in\RR[x]\text{ irreducible and reciprocal}:\deg(p)=2\text{ and }p|f\}$,

\item $I_2(f):=\{qq^*:q\in\RR[x]\text{ monic irreducible and non-reciprocal, }q|f\}$,

\item $I(f):=I_0(f)\cup I_1(f)\cup I_2(f)$.
\end{itemize}

Regard $V$ as an $\RR[x]$-module $V_{\sigma}$. The primary decomposition theorem of a finitely generated module over a principal ideal domain gives rise to a decomposition of $V_{\sigma}$ as $\RR$-linear subspaces
\[V_{\sigma}=\Bigperp_{p\in I(f)}\Bigperp_{l\geq1}V_{p,\sigma}^{(l)}.\]
Each $V_{p,\sigma}^{(l)}$ appearing in this decomposition is a free $\RR[x]/p^l$-module. Moreover, each part in parentheses constitutes a $p$-primary component, which can be described as
\[\{v\in V:p(\sigma)^lv=0\}.\]
Let $\tau\in\Ort(V)$ be another isometry. If $\sigma$ and $\tau$ are conjugate in $\GL(V)$, then the induced $\RR[x]$-modules $V_{\sigma}$ and $V_{\tau}$ are isomorphic. Therefore, $V_{\tau}$ admits an analogous decomposition as above.

Define
\[I_i^{\text{odd}}:=\{(p,l):p\in I_i(f), l\text{ is odd}, V_{p,\sigma}^{(l)}\neq0\}\]
and
\[I_i^{\text{ev}}:=\{(p,l):p\in I_i(f), l\text{ is even}, V_{p,\sigma}^{(l)}\neq0\}.\]
As we shall prove in \S\ref{s2.2}, the components corresponding to $p\in I_2(f)$ and those indexed by $I_0^{\text{ev}}$ contribute no additional invariants to the passage from $\GL(V)$-conjugacy to $\Ort(V)$-conjugacy. By contrast, the components indexed by $I_1^{\text{ev}}$ behave differently: if $I_1^{\text{ev}}\neq\emptyset$, then there exists $\tau\in\Ort(V)$ which is conjugate to $\sigma$ in $\GL(V)$ but not conjugate to $\sigma$ in $\Ort(V)$. Hence, we may assume $I_1^{\text{ev}}=\emptyset$ and it remains to study the components indexed by $I_0^{\text{odd}}$ and $I_1^{\text{odd}}$.

Let $p$ be a monic irreducible factor of $f$, denote the the multiplicity of the elementary divisor $p^l$ as $\epsilon_{p,\sigma}^{(l)}:=\mathrm{rank}_{\RR[x]/p^l}(V_{p,\sigma}^{(l)})$ (that is, the number of Jordan blocks of $\sigma$ under a suitable basis). Write
\[\epsilon_i:=\sum_{(p,l)\in I_i^{\text{odd}}}\epsilon_{p,\sigma}^{(l)},\quad i=0,1.\]
The main result of this paper is the following theorem.

\begin{theorem}\label{1}
Let $\sigma\in\Ort(V)$, and suppose
\[V_{\sigma}^{\text{odd}}:=\Bigperp_{(p,l)\in I_0^{\text{odd}}\cup I_1^{\text{odd}}}V_{p,\sigma}^{(l)}\]
has signature $(r,s)$, $r+s=n$. Then every $\tau\in\Ort(V)$ which is conjugate to $\sigma$ in $\GL(V)$ is conjugate to $\sigma$ in $\Ort(V)$ if and only if $I_1^{\text{ev}}=\emptyset$ and the equation
\[\frac{n}{2}-\frac{\epsilon_0}{2}-\epsilon_1+\sum_{(p,l)\in I_0^{\text{odd}}}w_{p,l}+2\sum_{(p,l)\in I_1^{\text{odd}}}z_{p,l}=r\]
has exactly one integral solution subject to
\[0\leq w_{p,l}\leq\epsilon_{p,\sigma}^{(l)}~((p,l)\in I_0^{\text{odd}});\quad0\leq z_{p,l}\leq\epsilon_{p,\sigma}^{(l)}~((p,l)\in I_1^{\text{odd}}).\]
\end{theorem}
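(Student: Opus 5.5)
We will use the standard classification of isometries via Hermitian forms on the homogeneous pieces (the Milnor/Wall/Riehm approach). Two isometries with isomorphic $\RR[x]$-modules are $\Ort(V)$-conjugate if and only if, for each pair $(p,l)$, the induced nondegenerate forms on the "leading" quotients
\[ H_{p}^{(l)} := \ker p(\sigma)^l / \bigl(\ker p(\sigma)^{l-1} + p(\sigma)\ker p(\sigma)^{l+1}\bigr) \]
are isometric. These are forms over the residue field $E_p=\RR[x]/p$ with its involution $x\mapsto x^{-1}$. The form is $\epsilon$-symmetric or $\epsilon$-Hermitian, and its sign depends on the parity of $l$.

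The plan is to work through the cases of $p$.
\begin{itemize}
\item For $p\in I_2(f)$, the space is hyperbolic, so there is no invariant.
\item For $p\in I_0(f)$ with $l$ even, the induced form on the $\RR$-space is alternating, hence determined by its rank.
\item For $p\in I_0(f)$ with $l$ odd, it is a symmetric real form of rank $\epsilon_{p,\sigma}^{(l)}$, determined by its signature. We parametrize that signature by the number $w_{p,l}$ of positive squares.
\item For $p\in I_1(f)$, $E_p=\CC$ with complex conjugation. When $l$ is odd the form is Hermitian, determined by the number $z_{p,l}$ of positive entries. When $l$ is even it is skew-Hermitian, hence again Hermitian after multiplying by $i$. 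So in both parities there is a nontrivial invariant $z_{p,l}\in[0,\epsilon_{p,\sigma}^{(l)}]$.
\end{itemize}

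The next step, which we expect to be the main computation, is to express the signature of the real quadratic space $V_{p,\sigma}^{(l)}$ in terms of these parameters. Using the explicit model $(\RR[x]/p^l)\otimes H$ with the trace form, the signature should come out as follows.
\begin{itemize}
\item For $p\in I_0$, $l$ odd: the $(l-1)/2$ hyperbolic planes per block, plus the sign of the middle term, give positive index $\frac{l-1}{2}\epsilon+w_{p,l}$.
\item For $p\in I_1$, $l$ odd: each positive Hermitian line contributes a positive definite plane, so the positive index is $(l-1)\epsilon+2z_{p,l}$.
\item For even $l$ (both $I_0$ and $I_1$): the space is hyperbolic, so the signature is independent of the invariant.
\end{itemize}
Summing over $I_0^{\text{odd}}\cup I_1^{\text{odd}}$ gives
\[ r=\sum_{I_0^{\text{odd}}}\Bigl(\tfrac{l-1}{2}\epsilon+w\Bigr)+\sum_{I_1^{\text{odd}}}\bigl((l-1)\epsilon+2z\bigr). \]
Here $n=\sum l\epsilon+\sum 2l\epsilon$, which matches the stated equation $\frac n2-\frac{\epsilon_0}2-\epsilon_1+\sum w+2\sum z=r$. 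The even parts contribute equally to both indices, which justifies restricting to $V^{\text{odd}}_\sigma$. Verifying this requires an explicit Gram matrix computation on one Jordan block; we expect this step to take the most care.

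We then prove the two directions.
\begin{itemize}
\item \emph{Realization.} Any choice of invariants (forms $H_{p}^{(l)}$) is realized by some $\tau$ on some quadratic space with the same module structure. By Witt cancellation and the classification of real quadratic forms by signature, $\tau$ lives on $V$ exactly when the total signature agrees.
\item \emph{If $I_1^{\text{ev}}\neq\emptyset$.} Changing $z_{p,l}$ for some even $l$ leaves the signature unchanged but gives a non-conjugate $\tau$ on $V$.
\item \emph{If $I_1^{\text{ev}}=\emptyset$.} The $\Ort(V)$-classes inside the $\GL(V)$-class are in bijection with the integral solutions $(w,z)$ of the equation in the stated ranges, since the even $I_0$ and $I_2$ parts carry no invariant. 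Uniqueness of the solution is therefore equivalent to the class being a single $\Ort(V)$-class.
\end{itemize}
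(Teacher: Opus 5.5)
Your proposal is correct and follows essentially the same route as the paper. The shared ingredients are:
\begin{itemize}
\item Milnor's classification by nondegenerate forms on the leading quotients (Propositions \ref{5}, \ref{11});
\item the single-Jordan-block Gram matrix computations giving positive indices $\frac{l-1}{2}\epsilon+w_{p,l}$ and $(l-1)\epsilon+2z_{p,l}$ (Lemmas \ref{6}, \ref{12} and Propositions \ref{7}, \ref{13});
\item hyperbolicity of the $I_2$ and even parts;
\item realizability, which yields a bijection between $\Ort(V)$-classes and integral solutions;
\item Proposition \ref{n} for the case $I_1^{\text{ev}}\neq\emptyset$.
\end{itemize}
The differences are cosmetic. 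You use the canonical quotients $\ker p(\sigma)^l/(\ker p(\sigma)^{l-1}+p(\sigma)\ker p(\sigma)^{l+1})$ rather than $V_{p,\sigma}^{(l)}/pV_{p,\sigma}^{(l)}$. Also, depending on how $b_l$ is normalized, your $w_{p,l}$ may need to be replaced by $\epsilon_{p,\sigma}^{(l)}-w_{p,l}$ when $l\equiv3~(\mo~4)$ (cf.\ Proposition \ref{13}); this does not change the number of solutions.
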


This gives a complete solution in the archimedean case and shows a sharp contrast with the non-archimedean case \cite[Theorem 1.2]{XZ26}. The proof proceeds by calculating the signatures of the individual subspaces associated with specific Jordan blocks, whose sum yields the signature of the total space. We will also provide an alternative formulation of the main result in Corollary \ref{h}.

{\bf{Acknowledgements}.}
The author thanks Fei XU for bringing this problem to his attention, and Yong HU for his helpful and generous comments.

\section{Decompositions Induced by Orthogonal Transformations}\label{s2}
In this section, in addition to introducing fundamental concepts, we demonstrate how to apply the key techniques from \cite{Mi69}.
\subsection{Quadratic Spaces}\label{s2.1}
We begin by reviewing several key concepts from the theory of quadratic forms.
\begin{definition}[Isometric Spaces]\label{2}
A non-degenerate quadratic space $(V,\langle*,*\rangle)$ over $\RR$ is a finite-dimensional $\RR$-linear space $V$ equipped with a non-degenerate symmetric bilinear form $\langle*,*\rangle$ on $V$ such that $\det(\langle e_i,e_j\rangle)\neq0$ for a basis $\{e_1,\cdots,e_n\}$ of $V$. A morphism between quadratic spaces is an isometry, which is an invertible $\RR$-linear map preserving the bilinear form. In particular, the orthogonal group $\Ort(V)$ (also denoted $\Ort(r,s)$) of $(V,\langle*,*\rangle)$ is
\[\Ort(V):=\{\sigma\in\GL(V):\langle\sigma u,\sigma v\rangle=\langle u,v\rangle\text{, for all }u,v\in V\},\]
elements in it are called orthogonal transformations. An isometric space is a triple $(V,\langle*,*\rangle,\sigma)$, where $\sigma\in\Ort(V)$ is an orthogonal transformation. Two isometric spaces are isomorphic if they are isometric as quadratic spaces and the additional orthogonal transformations are orthogonally conjugate.
\end{definition}

The complete invariant of a non-degenerate $(V,\langle*,*\rangle)$ over $\RR$ is the signature $(r,s)$ of $\langle*,*\rangle$, which means the quadratic form of $\langle*,*\rangle$ is $x_1^2+\cdots+x_r^2-x_{r+1}^2-\cdots-x_{r+s}^2$ under a suitable basis. A non-degenerate $2$-dimensional quadratic space over $\RR$ is called a hyperbolic plane if it has signature $(1,1)$.

If $V_1$, $V_2$ are subspaces of a quadratic space $(V,\langle*,*\rangle)$, we say that $V_1$ and $V_2$ are orthogonal, denote as $V_1\perp V_2$, if $\langle v_1,v_2\rangle=0$ for all $v_1\in V_1$, $v_2\in V_2$.

\begin{definition}[Hermitian Spaces]\label{3}
Let $\iota:\CC\to\CC$ be the conjugation involution. A non-degenerate Hermitian space $(V,h)$ over $(\CC,\iota)$ is a finite-dimensional $\CC$-linear space $V$ equipped with a non-degenerate Hermitian form $h$ with respect to the involution $\iota$ such that $\det h(e_i,e_j)\neq0$ for a basis $\{e_1,\cdots,e_n\}$ of $V$.
\end{definition}
The complete invariant of a non-degenerate $(V,h)$ over $\CC$ is also the signature of $h$.

\subsection{An Orthogonal Decomposition}\label{s2.2}
Let $(V,\langle*,*\rangle)$ be a non-degenerate quadratic space over $\RR$. For an orthogonal transformation $\sigma\in\Ort(V)$, let $f$ be its characteristic polynomial. By \cite[eq. (1)]{XZ26}, $f$ has the form
\[f=\prod_{p\in I(f)}p^{e_p},\quad e_p\geq1.\]

Now regard $V$ as an $\RR[\sigma]$-module $V_{\sigma}$. For $p\in I(f)$, one defines the $p$-primary component of $V$ as
\[V_{p,\sigma}:=\{v\in V:p(\sigma)^lv=0,l\gg0\}.\]
The primary decomposition theorem of a finitely generated module over a principal ideal domain gives rise to a decomposition of $V_{\sigma}$ as $\RR$-linear subspaces
\[V_{\sigma}=\bigoplus_{p\in I(f)}V_{p,\sigma},\]
where each $p$-primary component admits a more refined decomposition
\[V_{p,\sigma}=\bigoplus_{l\geq1}V_{p,\sigma}^{(l)}\]
such that $V_{p,\sigma}^{(l)}$ is a free $\RR[x]/p^l$-module.

By \cite[Lemma 3.1]{Mi69}, each $p$-primary component is orthogonal to other $p'$-primary components unless $p'=p^*$. Furthermore, while \cite[Theorem 3.2]{Mi69} guarantees the orthogonality of the refined decompositions for primary components indexed by $I_0(f)$ and $I_1(f)$, \cite[Theorem 2.5]{XZ26} establishes an orthogonal decomposition for those indexed by $I_2(f)$. Hence,
\[V_{\sigma}=\Bigperp_{p\in I(f)}\Bigperp_{l\geq1}V_{p,\sigma}^{(l)}.\]

Let $\tau\in\Ort(V)$ be another orthogonal transformation that is conjugate to $\sigma$ in $\GL(V)$. By repeating the above procedure, we obtain a decomposition
\[V_{\tau}=\Bigperp_{p\in I(f)}\Bigperp_{l\geq1}V_{p,\tau}^{(l)}.\]

It is clear that $V_{\sigma}$ and $V_{\tau}$ are isomorphic as $\RR[x]$-modules; hence, their corresponding primary components are also $\RR[x]$-isomorphic. Our preliminary conclusion is that, if $\sigma,\tau\in\Ort(V)$ are conjugate in $\GL(V)$, then the corresponding components indexed by $I_2(f)$ and $I_0^{\text{ev}}$ are isomorphic as isometric spaces and therefore contribute no additional conjugacy invariants.

\begin{proposition}[{\cite[Corollary 3.5]{XZ26}}]\label{4}
Let $\sigma,\tau\in\Ort(V)$. Suppose $\sigma$ and $\tau$ are conjugate in $\GL(V)$.
\begin{itemize}
\item If $p\in I_2(f)$ and $p=qq^*$, then there is an isomorphism of isometric spaces
\[(V_{q,\sigma}\oplus V_{q^*,\sigma},\sigma)\overset{\sim}{\to}(V_{q,\tau}\oplus V_{q^*,\tau},\tau).\]

\item If $(p,l)\in I_0^{\text{ev}}$, then there is an isomorphism of isometric spaces
\[(V_{p,\sigma}^{(l)},\sigma)\overset{\sim}{\to}(V_{p,\tau}^{(l)},\tau).\]
\end{itemize}
\end{proposition}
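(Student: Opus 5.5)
This is a cited result, \cite[Corollary 3.5]{XZ26}, but I would reprove it along Milnor's lines as follows.

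\emph{The $I_2(f)$ case.} Put $W_\sigma:=V_{q,\sigma}\oplus V_{q^*,\sigma}$. By \cite[Lemma 3.1]{Mi69}, $V_{q,\sigma}$ and $V_{q^*,\sigma}$ are each totally isotropic, since $q\neq q^*$. Non-degeneracy of $W_\sigma$ then forces the form to induce a perfect pairing $V_{q^*,\sigma}\cong\mathrm{Hom}_\RR(V_{q,\sigma},\RR)$. Under this pairing, $\sigma|_{V_{q^*,\sigma}}$ corresponds to the inverse transpose of $\sigma|_{V_{q,\sigma}}$, because $\langle\sigma u,\sigma v\rangle=\langle u,v\rangle$.

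Hence the isometric space $(W_\sigma,\sigma)$ is determined, up to isomorphism, by the $\RR[x]$-module $V_{q,\sigma}$ alone. Concretely, suppose $\phi:V_{q,\sigma}\to V_{q,\tau}$ is an $\RR[x]$-isomorphism, which exists because $\sigma$ and $\tau$ are $\GL(V)$-conjugate. I would extend $\phi$ by the dual-inverse map on $V_{q^*,\sigma}\to V_{q^*,\tau}$. This extension preserves both the form and the transformation, and it is the required isomorphism.

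\emph{The $I_0^{\text{ev}}$ case.} Say $p=x-\varepsilon$ with $\varepsilon=\pm1$ and $l$ even, and put $U:=V_{p,\sigma}^{(l)}$, which is non-degenerate by the orthogonal decomposition. Following Milnor, I would pass to the module $U/pU$, a free $\RR[x]/p$-module, i.e.\ a real vector space. On it define
\[
b(\bar u,\bar v):=\langle u,(\sigma-\sigma^{-1})^{l-1}v\rangle .
\]
Since $\sigma-\sigma^{-1}$ is nilpotent with kernel and image structure matching $p$ on $U$, the form $b$ is well defined and non-degenerate. Moreover, $(\sigma-\sigma^{-1})^*=-(\sigma-\sigma^{-1})$ and $l-1$ is odd, so $b$ is \emph{skew}-symmetric.

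Milnor's reduction (\cite[Theorem 3.2]{Mi69} and its proof) shows that the isometry class of $(U,\sigma)$ is determined by the isometry class of the reduced form $b$ on $U/pU$. I would reprove this by a Hensel-type lifting of bases adapted to $\sigma$, going up the $p$-adic filtration. Non-degenerate skew forms of a given dimension over $\RR$ are unique up to isometry. Therefore $(V_{p,\sigma}^{(l)},\sigma)\cong(V_{p,\tau}^{(l)},\tau)$, since both modules have the same rank.

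The main obstacle is the lifting step: showing the isometric space is determined by the form on $U/pU$. I would try an explicit normal form. Choose $u_1,\dots,u_m$ generating $U$ over $\RR[x]/p^l$ with prescribed pairings $\langle u_i,(\sigma-\sigma^{-1})^jv\rangle$. Then correct each $u_i$ successively by terms in $p^kU$, solving linear equations at each step. The step-by-step corrections work because the field has characteristic $\neq2$, which lets us halve at each stage.
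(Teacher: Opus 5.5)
Your proposal is correct and takes the same route the paper relies on. In the $I_2(f)$ case, $V_{q,\sigma}$ and $V_{q^*,\sigma}$ are totally isotropic and in perfect duality, so the isometric space is determined by the $\RR[x]$-module $V_{q,\sigma}$. In the $I_0^{\text{ev}}$ case, the reduced form on $V_{p,\sigma}^{(l)}/pV_{p,\sigma}^{(l)}$ is skew-symmetric, hence unique up to isometry, and it determines the component; this is exactly the reason the paper gives at the start of \S4.1. The one correction is to the citation for the lifting step you only sketch: it is \cite[Theorem 3.4]{Mi69}, as the paper cites in \S4.1, whereas Theorem 3.2 only provides the orthogonal splitting.
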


However, the components indexed by $I_1^{\text{ev}}$ carry additional Hermitian invariants and therefore should be treated carefully, see Proposition \ref{n}. Consequently, we may reduce the problem in \S\ref{s1} to the cases corresponding to $I_1^{\text{odd}}$ and $I_0^{\text{odd}}$, which are addressed in \S\ref{s3} and \S\ref{s4}, respectively.

\section{Quadratic Irreducible Factors}\label{s3}
In this section, we focus on the components indexed by $I_1(f)$. For $\sigma\in\Ort(V)$, this part is
\[\Bigperp_{(p,l)\in I_1^{\text{odd}}}V_{p,\sigma}^{(l)}\perp\Bigperp_{(p,l)\in I_1^{\text{ev}}}V_{p,\sigma}^{(l)},\]
The core result of this section is that we only need to consider the subspaces of $V=V_{\sigma}$ indexed by $I_1^{\text{odd}}$. We first examine the case of a single $p\in I_1(f)$, and then observe that the general case follows by the same argument.

\subsection{A Complete Invariant}\label{s3.1}
Let $p=x^2+ax+1\in I_1(f)$, its $p$-primary component can be characterized by a specific method as follows.

\begin{proposition}[{\cite[Theorem 3.3]{Mi69}} and {\cite[Proposition 3.2]{XZ26}}]\label{5}
Let $\sigma\in\Ort(V)$, suppose $p\in I_1(f)$. The set of non-degenerate Hermitian $\CC$-linear spaces (with respect to the involution $x\mapsto x^{-1}$)
\[\{(V_{p,\sigma}^{(l)}/pV_{p,\sigma}^{(l)},h_l):l\geq1\}/\text{isometry}\]
determines the isometric space $(V_{p,\sigma},\sigma|_{V_{p,\sigma}})$ completely, where $V_{p,\sigma}=\Bigperp_{l\geq1}V_{p,\sigma}^{(l)}$.
\end{proposition}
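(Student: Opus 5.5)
We would follow Milnor's approach and reduce to the ``free'' case, where the primary component is a free module over $\RR[x]/p^l$. By the orthogonal decomposition of \S\ref{s2.2} (Milnor's Theorem 3.2), $V_{p,\sigma}$ splits orthogonally as $\Bigperp_{l}V_{p,\sigma}^{(l)}$, and each summand is $\sigma$-stable. An isometry of isometric spaces $(V_{p,\sigma},\sigma)\cong(V_{p,\tau},\tau)$ can therefore be assembled from isometries of the individual summands $(V_{p,\sigma}^{(l)},\sigma)\cong(V_{p,\tau}^{(l)},\tau)$. Conversely, any such isometry is an $\RR[x]$-module isomorphism, so it preserves the $p$-adic filtration. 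Hence it induces isometries of the reduced Hermitian spaces $V_{p,\sigma}^{(l)}/pV_{p,\sigma}^{(l)}$, and these do not depend on the chosen splitting. This shows the $h_l$ are invariants. The substantive claim is the converse: the $h_l$ determine the isometric space. We would prove it one $l$ at a time.

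For fixed $l$, write $M=V_{p,\sigma}^{(l)}$. It is a free module over $A=\RR[x]/p^l$, and $A$ carries the involution $\bar{x}=x^{-1}$. Since $p$ is reciprocal, irreducible and quadratic, $A/p\cong\CC$, with the involution going to complex conjugation. We would first translate the symmetric form into a $A$-valued Hermitian form. Choose the $\RR$-linear functional $\lambda:A\to\RR$ given by the trace to $\RR$ composed with multiplication by an invertible element, arranged so that the trace-form pairing is perfect and $\lambda(\bar a)=\lambda(a)$. Then there is a unique $A$-sesquilinear form $H:M\times M\to A$ with $\langle u,v\rangle=\lambda(H(u,v))$, obtained by dualising via $\lambda$. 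The form $H$ is Hermitian because $\langle*,*\rangle$ is symmetric and $\sigma$ is an isometry. It is unimodular because $\langle*,*\rangle$ is non-degenerate and $\lambda$ induces a perfect pairing. Reducing $H$ modulo $p$ gives a non-degenerate Hermitian form over $\CC$, which is the $h_l$ of the statement. Because $\lambda$ may involve a twist, this $h_l$ is possibly scaled by a fixed unit.

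The key step is a Hensel-type lifting: two unimodular Hermitian forms over the complete local ring $A$ with an involution are isometric if and only if their reductions modulo the maximal ideal $(p)$ are isometric. We would prove it by diagonalisation. Over $A$ every unimodular Hermitian module has an orthogonal basis. To see this, pick $e$ with $H(e,e)$ a unit. Such an $e$ exists, since otherwise the reduction would be alternating Hermitian, and over $\CC$ with $\mathrm{char}\neq 2$ this forces it to be zero, contradicting non-degeneracy. Then split off $Ae$ and induct. Each diagonal entry is a unit $u=\bar u$ in the fixed subring $A_0$ of the involution. Such a unit is a norm $c\bar c$ times its residue sign $\pm1$. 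This holds because $A_0$ is a complete local ring with residue field $\RR$, and by Hensel's lemma, applied since $2$ is invertible, a unit of $A_0$ with positive residue is a square in $A_0$. Thus the isometry class of $H$ is determined by the signs of its diagonal entries mod $p$, that is, by the signature of $h_l$. Finally, an $A$-isometry of Hermitian forms gives, after applying $\lambda$, a $\sigma$-equivariant isometry of the real quadratic spaces.

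We expect the main obstacle to be the careful choice of $\lambda$. It must be symmetric under the involution and give a perfect pairing $A\times A\to\RR$. The natural choice is $\lambda(a)=$ coefficient extraction (a residue functional) for $A=\RR[x]/p^l$ viewed appropriately. One then has to check its compatibility with $x\mapsto x^{-1}$, which may require a twist by a unit such as $x^{k}$ or $(x-x^{-1})$. This twist is exactly what makes the parity of $l$ relevant later. Once $\lambda$ is fixed, the remaining steps are the standard transfer and Hensel arguments sketched above, and the reassembly over all $l$ follows from the orthogonality of the decomposition.
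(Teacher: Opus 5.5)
Your strategy is correct, and it is the standard argument for this result. The paper does not prove the proposition itself: it quotes Milnor's Theorem 3.3 and XZ26, Proposition 3.2. Its explicit formula for $h_l$ and the rank-one reduction used in Lemma \ref{6} are consistent with your route. That route is: transfer $\langle*,*\rangle$ on $V_{p,\sigma}^{(l)}$ to a unimodular Hermitian form $H$ over $A=\RR[x]/p^l$, diagonalize over the local ring $A$, and normalize each diagonal entry to $\pm1$ by taking square roots in the fixed ring $A_0=\RR[s]/(s^l)$, where $s=x+a+x^{-1}$. The diagonalization, the square-root step, the passage back through $\lambda$, and the reassembly over $l$ are all fine.

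\textbf{The choice of $\lambda$.} This step is left open, and your first suggested construction for it fails. No functional of the form $\tr_{A/\RR}(c\,\cdot)$ gives a perfect pairing once $l\geq2$, whatever $c$ is. Indeed, if $b\in pA\neq0$, then $cab$ is nilpotent, so $\tr_{A/\RR}(cab)=0$ for all $a$. What you need is an involution-invariant $\lambda$ that is nonzero on the minimal ideal $s^{l-1}A=p^{l-1}A\cong\CC$, because every nonzero ideal of $A$ contains it. A concrete choice is to let $\lambda(c)$ be the coefficient of $s^{l-1}$ in $c+\bar c\in A_0$.
\begin{itemize}
\item It is obviously invariant under the involution.
\item Write $c=\alpha+\beta x$ with $\alpha,\beta\in A_0$. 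Then $s^{l-1}(c+\bar c)=s^{l-1}\bigl(2\alpha+\beta(s-a)\bigr)$. Hence $\lambda(s^{l-1}c)=2\alpha(0)-a\beta(0)=\tr_{\CC/\RR}(c\bmod p)$, where $\alpha(0),\beta(0)$ are constant terms in $s$ and $c\bmod p=\alpha(0)+\beta(0)\zeta$ with $\zeta+\bar\zeta=-a$. So $\lambda$ is nonzero on $s^{l-1}A$.
\item With this $\lambda$ you get $\langle s(\sigma)^{l-1}u,v\rangle=\lambda\bigl(s^{l-1}H(u,v)\bigr)=\tr_{\CC/\RR}\bigl(H(u,v)\bmod p\bigr)$. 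So the reduction of $H$ is exactly the $h_l$ of the statement, with no rescaling.
\end{itemize}
In particular no twist is needed. Twisting by the skew unit $x-x^{-1}$ would make $\lambda$ anti-invariant and $H$ skew-Hermitian. The parity of $l$ plays no role in this proposition. It enters only in Lemma \ref{6}, through the signature of the real form $c\mapsto\pm\lambda(c\bar c)$ on $A$. That form is hyperbolic for even $l$ because $s^{l/2}A$ is a Lagrangian.

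\textbf{The invariance direction.} This is a smaller point. The splitting of $V_{p,\sigma}$ into the $V_{p,\sigma}^{(l)}$ is not canonical, so an isometry $(V_{p,\sigma},\sigma)\to(V_{p,\tau},\tau)$ need not carry $V_{p,\sigma}^{(l)}$ into $V_{p,\tau}^{(l)}$. To make the argument work, identify $V_{p,\sigma}^{(l)}/pV_{p,\sigma}^{(l)}$ with $\bigl(p(\sigma)^{l-1}V_{p,\sigma}\cap\ker p(\sigma)\bigr)/\bigl(p(\sigma)^{l}V_{p,\sigma}\cap\ker p(\sigma)\bigr)$ via $\bar u\mapsto p(\sigma)^{l-1}u$. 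Then check two things, using that $s(\sigma)=\sigma^{-1}p(\sigma)$ is self-adjoint:
\begin{itemize}
\item $\langle s(\sigma)^{l-1}u,v\rangle$ depends only on $p(\sigma)^{l-1}u$ and $p(\sigma)^{l-1}v$;
\item it vanishes when one of these lies in $p(\sigma)^{l}V_{p,\sigma}$.
\end{itemize}
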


To be more precise, for $l\geq1$, if the rank of each free $\RR[x]/p^l$-module $V_{p,\sigma}^{(l)}$ is $n_l$, then each pair $(V_{p,\sigma}^{(l)}/pV_{p,\sigma}^{(l)},h_l)$ has $\CC$-dimension $n_l$ and corresponds to a triple
\[(V_{p,\sigma}^{(l)},\langle*,*\rangle_l,\sigma|_{V_{p,\sigma}^{(l)}})\]
uniquely, where $V_{p,\sigma}^{(l)}$ has $\RR$-dimension $2ln_l$, $\sigma|_{V_{p,\sigma}^{(l)}}$ has minimal polynomial $p^l$ and
\[\tr_{\CC/\RR}(h_l(\overline{u},\overline{v}))=\big\langle(\sigma|_{V_{p,\sigma}^{(l)}}^{-1}p(\sigma|_{V_{p,\sigma}^{(l)}}))^{l-1}u,v\big\rangle_l\]
for $u,v\in V_{p,\sigma}^{(l)}$.

Furthermore, with respect to a suitable basis, $\sigma|_{V_{p,\sigma}^{(l)}}$ is a direct sum of $n_l$ copies of the Jordan block of size $2l$ with elementary divisor $p^l$. If the signature of $h_l$ is $(r_l,s_l)$, $r_l+s_l=n_l$, then there are $r_l$ positive and $s_l$ negative Jordan blocks contained in the matrix representation of $\sigma|_{V_{p,\sigma}^{(l)}}$.

\begin{proposition}\label{n}
Let $\sigma\in\Ort(V)$. If $I_1^{\text{ev}}\neq\emptyset$, then there exists an isometry $\tau\in\Ort(V)$ which is conjugate to $\sigma$ in $\GL(V)$ but is not conjugate to $\sigma$ in $\Ort(V)$.
\end{proposition}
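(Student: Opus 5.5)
The idea is to change only the Hermitian invariant of one even-indexed component, keeping its real quadratic space unchanged, and then glue the modified transformation to $\sigma$ on the orthogonal complement.

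\textbf{Setup.} Fix $(p,l)\in I_1^{\text{ev}}$, so that $p=x^2+ax+1\in I_1(f)$, $l$ is even, and $W:=V_{p,\sigma}^{(l)}\neq0$. Put $n_l=\mathrm{rank}_{\RR[x]/p^l}W\geq1$, so $\dim_\RR W=2ln_l$. By the orthogonal decomposition of \S\ref{s2.2} we have $V=W\perp W^\perp$, and $\sigma$ preserves both summands. By Proposition \ref{5}, the pair $(W,\sigma|_W)$ corresponds to a non-degenerate Hermitian space $(W/pW,h_l)$ of dimension $n_l$ with some signature $(r_l,s_l)$.

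\textbf{Step 1: the real form on $W$ is hyperbolic.} I claim the real quadratic space $W$ always has signature $(ln_l,ln_l)$, independently of $h_l$. The adjoint of $p(\sigma)$ is $p(\sigma^{-1})=\sigma^{-2}p(\sigma)$. Set $U:=p(\sigma)^{l/2}W$, which makes sense since $l$ is even. For $u,v\in W$,
\[\langle p(\sigma)^{l/2}u,p(\sigma)^{l/2}v\rangle=\langle \sigma^{-l}p(\sigma)^{l}u,v\rangle=0,\]
because $p(\sigma)^l$ kills $W$. So $U$ is totally isotropic. Since $W$ is free over $\RR[x]/p^l$, the subspace $U$ is free of rank $n_l$ over $\RR[x]/p^{l/2}$, and hence $\dim_\RR U=ln_l=\frac12\dim W$. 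A non-degenerate real space with a totally isotropic subspace of half its dimension is hyperbolic, which proves the claim.

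\textbf{Step 2: construct $\tau$.} Choose a signature $(r_l',s_l')\neq(r_l,s_l)$ with $r_l'+s_l'=n_l$; this is possible because $n_l\geq1$. Proposition \ref{5} (its existence direction) gives an isometric space $(W',\langle*,*\rangle',\tau')$ with these properties:
\begin{itemize}
\item $\tau'$ is a sum of $n_l$ Jordan blocks with elementary divisor $p^l$;
\item its Hermitian invariant has signature $(r_l',s_l')$.
\end{itemize}
By Step 1, $W'$ also has signature $(ln_l,ln_l)$, so there is an isometry $\phi:W'\to W$. Define $\tau:=\phi\tau'\phi^{-1}$ on $W$ and $\tau:=\sigma$ on $W^\perp$. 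Then $\tau\in\Ort(V)$. Moreover $V_\tau\cong V_\sigma$ as $\RR[x]$-modules, since they have the same elementary divisors, so $\tau$ is conjugate to $\sigma$ in $\GL(V)$.

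\textbf{Step 3: $\tau$ is not conjugate to $\sigma$ in $\Ort(V)$.} Suppose $g\in\Ort(V)$ satisfies $g\sigma g^{-1}=\tau$. Then $g$ carries $V_{p,\sigma}$ isometrically and equivariantly onto $V_{p,\tau}$. It therefore induces isometries of the Hermitian spaces attached to every $l$ in Proposition \ref{5}.

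The point to check here, and the step I expect to be the main obstacle, is that $h_l$ is an invariant of $(V_{p,\sigma},\sigma)$ and not of the chosen splitting into the $V_{p,\sigma}^{(l)}$. I would argue this via Milnor's intrinsic description \cite[Theorem 3.3]{Mi69}. There $h_l$ lives on the canonical subquotient
\[\frac{\ker p(\sigma)^l\cap p(\sigma)^{0}V_{p,\sigma}}{\ker p(\sigma)^{l-1}+p(\sigma)\ker p(\sigma)^{l+1}},\]
which $g$ respects, and the form is defined by the displayed formula with $(\sigma^{-1}p(\sigma))^{l-1}$.

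Granting this, the signatures of $h_l$ for $\sigma$ and for $\tau$ would have to agree. But they are $(r_l,s_l)$ and $(r_l',s_l')$, a contradiction. Hence $\tau$ is conjugate to $\sigma$ in $\GL(V)$ but not in $\Ort(V)$.
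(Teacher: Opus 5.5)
Your proposal is correct and follows the same route as the paper. You change the signature of the Hermitian invariant on one component $V_{p,\sigma}^{(l)}$ with $l$ even, realize the new form via Proposition~\ref{5}, use that the underlying real space is hyperbolic to transplant it isometrically, and glue with $\sigma$ on the orthogonal complement. You also supply two details the paper leaves implicit: the totally isotropic subspace $p(\sigma)^{l/2}W$ proving hyperbolicity, and the splitting-independence of $h_l$ via the canonical subquotient $\ker p(\sigma)^l/(\ker p(\sigma)^{l-1}+p(\sigma)\ker p(\sigma)^{l+1})$, which is exactly what the paper's ``one can verify'' relies on.
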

\begin{proof}
Suppose $(p,l)\in I_1^{\text{ev}}$. The associated Hermitian form $h_l$ on $V_{p,\sigma}^{(l)}/pV_{p,\sigma}^{(l)}$ is classified by its signature, so there exists another Hermitian form $h_l'$ of the same dimension with a different signature. By Proposition \ref{5}, $h_l'$ determines an isometric space with the same structural $\RR[x]$-module as $V_{p,\sigma}^{(l)}$. Since $l$ is even, both underlying real quadratic spaces are hyperbolic of the same dimension, and hence are isometric. Replacing the $p^l$-component of $\sigma$ by this new isometric space and leaving all other components unchanged yields $\tau\in\Ort(V)$, one can verify this $\tau$ is what we need.
\end{proof}

In particular, a necessary condition for every isometry in $\Ort(V)$ that is $\GL(V)$-conjugate to $\sigma$ to be $\Ort(V)$-conjugate to $\sigma$ is
$I_1^{\text{ev}}=\emptyset$.

\subsection{The Range of Signatures}\label{s3.2}
Let $p=x^2+ax+1\in I_1(f)$. We now determine the possible signatures where $\sigma|_{V_{p,\sigma}^{(l)}}$ lies in.

\begin{lemma}\label{6}
An indecomposable component of $\sigma|_{V_{p,\sigma}^{(l)}}$ with minimal polynomial $p^l$ must lie in
\begin{itemize}
\item $\Ort(l,l)$, if $l$ is even.

\item $\Ort(l\pm1,l\mp1)$, if $l$ is odd.
\end{itemize}
\end{lemma}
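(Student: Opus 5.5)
An indecomposable component here is a $2l$-dimensional real quadratic space $W$ with an isometry $\sigma$ whose minimal polynomial is $p^l$, $p=x^2+ax+1$ irreducible. By Proposition \ref{5} and the remarks after it, $W$ corresponds to a $1$-dimensional Hermitian space $(W/pW,h_l)$ over $\CC=\RR[x]/p$, with $h_l=\pm1$. The plan is to compute the signature of $\langle*,*\rangle$ on $W$ via the filtration $W\supset pW\supset\cdots\supset p^{l}W=0$, where $p$ acts through $p(\sigma)$.

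\textbf{Step 1 (isotropic part).} Put $\pi=\sigma^{-1}p(\sigma)$. It is self-adjoint, since $\sigma^*=\sigma^{-1}$ gives $\pi^*=\sigma+\sigma^{-1}+a=\pi$, and it generates the same filtration as $p(\sigma)$. Each $p^jW$ has real dimension $2(l-j)$. From $\langle\pi^iu,\pi^jv\rangle=\langle\pi^{i+j}u,v\rangle$ and $\pi^l=0$ we get $p^iW\perp p^jW$ whenever $i+j\ge l$. Let $m=\lfloor l/2\rfloor$. Then $U:=p^{\lceil l/2\rceil}W$ is totally isotropic of dimension $2m$.

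\textbf{Step 2 (even $l$).} If $l=2m$, then $U$ is totally isotropic of dimension $l$, which is half of $\dim W=2l$. A non-degenerate real space of dimension $2l$ with an $l$-dimensional totally isotropic subspace is hyperbolic, so its signature is $(l,l)$.

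\textbf{Step 3 (odd $l$).} If $l=2m+1$, set $U=p^{m+1}W$, of dimension $2m$. Then $U^\perp\supseteq p^mW$, and by dimension count $U^\perp=p^mW$, which has dimension $2m+2$. Hence $U^\perp/U=p^mW/p^{m+1}W$ is a non-degenerate $2$-dimensional space. By the standard hyperbolic splitting, $W\cong H^{2m}\perp (U^\perp/U)$, where $H^{2m}$ denotes hyperbolic space of dimension $4m$, i.e.\ signature $(2m,2m)$.

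It remains to compute the form on $U^\perp/U$. Write its elements as $\pi^mu,\pi^mv$. Then $\langle\pi^mu,\pi^mv\rangle=\langle\pi^{l-1}u,v\rangle=\tr_{\CC/\RR}(h_l(\bar u,\bar v))$ by the formula after Proposition \ref{5}. For a $1$-dimensional Hermitian form $h_l=\epsilon z\bar w$ with $\epsilon=\pm1$, this trace form is $2\epsilon\,\mathrm{Re}(z\bar w)$, which is definite of sign $\epsilon$ on $\RR^2$. So $U^\perp/U$ has signature $(2,0)$ or $(0,2)$, and $W$ has signature $(2m+2,2m)=(l+1,l-1)$ or $(l-1,l+1)$. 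Both cases occur, according to the sign of $h_l$.

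\textbf{Main obstacle.} The delicate step is identifying the induced form on $p^mW/p^{m+1}W$ with the trace form of $h_l$. This requires checking that the map $W/pW\to p^mW/p^{m+1}W$, $\bar u\mapsto\pi^m u$, is a well-defined $\CC$-linear isomorphism. It also requires checking that the paper's normalization $\tr(h_l)=\langle\pi^{l-1}u,v\rangle$ is compatible with it, so that the sign of $h_l$ transfers. I would verify this directly using the self-adjointness of $\pi$ and $\pi^l=0$.
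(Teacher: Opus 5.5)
Your proof is correct, but it takes a different route from the paper's.

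\textbf{The paper's route.} The paper works in the model $\RR[x]/p^l$ with basis $\{s^i\}\cup\{xs^i\}$, $s=x+a+x^{-1}$. It takes from \cite{XZ26} the explicit Gram matrix $G=\begin{pmatrix}2&-a\\-a&2\end{pmatrix}\otimes J$, where $J$ is the $l\times l$ exchange matrix. It then counts signs of eigenvalues: $\pm(2\pm a)$, with multiplicities $\lceil l/2\rceil$ and $\lfloor l/2\rfloor$.

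\textbf{Your route.} You argue intrinsically:
\begin{itemize}
\item Self-adjointness of $\pi=\sigma+a+\sigma^{-1}$ makes $p^{\lceil l/2\rceil}W$ totally isotropic of dimension $2\lfloor l/2\rfloor$.
\item Witt splitting reduces the odd case to the $2$-dimensional quotient $p^mW/p^{m+1}W$.
\item On that quotient the form is $\tr_{\CC/\RR}h_l$.
\end{itemize}

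\textbf{Your ``main obstacle''.} This step is actually immediate. The map $\bar u\mapsto\overline{\pi^m u}$ is well defined because $\pi^m(pW)=p^{m+1}W$. It is an $\RR[x]$-linear surjection between $2$-dimensional spaces. The identity $\langle\pi^mu,\pi^mv\rangle=\langle\pi^{l-1}u,v\rangle$ is literally the normalization stated after Proposition~\ref{5}. So the sign of $h_l$ transfers with nothing further to check.

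\textbf{What each route buys.}
\begin{itemize}
\item Your argument uses only that defining property of $h_l$. It is insensitive to Gram entries strictly above the anti-diagonal, so it does not depend on the normal form imported from \cite{XZ26}.
\item It explains conceptually why even $l$ gives a hyperbolic space, which is the fact reused in Proposition~\ref{n}.
\item It transfers verbatim to Lemma~\ref{12}. There the skew-adjoint $\sigma-\sigma^{-1}$ gives $\langle\pi^mu,\pi^mv\rangle=(-1)^m b_l(\bar u,\bar v)$, which is exactly the dependence on $l \bmod 4$.
\item The paper's route, in exchange, is a fully explicit computation that yields the whole spectrum of a concrete Gram matrix.
\end{itemize}
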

When $l$ is odd, the signature of the orthogonal group where this indecomposable component lies in is determined by the structure of a $1$-dimensional non-degenerate Hermitian space, by Proposition \ref{5}. It is well known that such a Hermitian space is uniquely determined by the signature $(1,0)$ or $(0,1)$.

\begin{proof}
Note that $|a|<2$. Without loss of generality, assume $V_{p,\sigma}^{(l)}=\RR[x]/p^l$, it has a $\RR$-basis
\[\{s^0,s^1,\cdots,s^{l-1}\}\cup\{xs^0,xs^1,\cdots,xs^{l-1}\}\]
by \cite[Lemma 3.1]{XZ26}, where $s=x+a+x^{-1}$. By the arguments in \cite[Proposition 3.2]{XZ26}, the Gram matrix of the symmetric bilinear form given by Proposition \ref{5} under this basis is
\[G=\left(\begin{array}{ccc|ccc}
&&2&&&-a\\
&\cdots&&&\cdots&\\
2&&&-a&&\\
\hline
&&-a&&&2\\
&\cdots&&&\cdots&\\
-a&&&2&&
\end{array}\right)\]
if the signature of $(V_{p,\sigma}^{(l)}/pV_{p,\sigma}^{(l)},h_l)\cong(\CC,h_l)$ is $(1,0)$; and $-G$ if the signature is $(0,1)$. A direct computation gives the characteristic polynomial of $G$ as
\[\big((x-2+a)(x-2-a)\big)^{\lceil l/2\rceil}\big((x+2-a)(x+2+a)\big)^{\lfloor l/2\rfloor},\]
it has $2\times\lceil l/2\rceil$ positive real roots and $2\times\lfloor l/2\rfloor$ negative real roots; and
\[\big((x+2-a)(x+2+a)\big)^{\lceil l/2\rceil}\big((x-2+a)(x-2-a)\big)^{\lfloor l/2\rfloor}\]
for the case $-G$, which has $2\times\lfloor l/2\rfloor$ positive real roots and $2\times\lceil l/2\rceil$ negative real roots.
\end{proof}

\begin{proposition}\label{7}
Let $p\in I_1(f)$. In the context of Proposition \ref{5}, if $l$ is odd and $h_l$ has signature $(r_l,s_l)$, then $\langle*,*\rangle_l$ has signature
\[(ln_l+r_l-s_l,ln_l+s_l-r_l).\]
\end{proposition}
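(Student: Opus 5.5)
The plan is to reduce to indecomposable pieces and add up the signatures computed in Lemma \ref{6}. By Proposition \ref{5} and the discussion after it, the Hermitian space $(V_{p,\sigma}^{(l)}/pV_{p,\sigma}^{(l)},h_l)$ of signature $(r_l,s_l)$ diagonalizes as an orthogonal sum of $r_l$ one-dimensional spaces of signature $(1,0)$ and $s_l$ of signature $(0,1)$. I would first check that this splitting lifts. Concretely, an orthogonal decomposition of $h_l$ should induce an orthogonal decomposition
\[V_{p,\sigma}^{(l)}=\Bigperp_{i=1}^{n_l}W_i\]
into $\sigma$-stable subspaces, each a free $\RR[x]/p^l$-module of rank one. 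The restriction of $h_l$ to $W_i/pW_i$ is then the corresponding one-dimensional summand. This follows from the uniqueness part of Proposition \ref{5}. The isometric space attached to the orthogonal sum of Hermitian forms is the orthogonal sum of the isometric spaces attached to each summand, because the correspondence is given by the trace formula
\[\tr_{\CC/\RR}(h_l(\overline{u},\overline{v}))=\langle(\sigma^{-1}p(\sigma))^{l-1}u,v\rangle_l,\]
which is compatible with direct sums.

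Next, I would apply Lemma \ref{6} to each $W_i$, keeping track of which sign occurs rather than just the two options $\Ort(l\pm1,l\mp1)$. The proof of Lemma \ref{6} gives the count directly. When the one-dimensional Hermitian form has signature $(1,0)$, the Gram matrix $G$ has $2\lceil l/2\rceil=l+1$ positive and $2\lfloor l/2\rfloor=l-1$ negative eigenvalues, since $l$ is odd; these are all real and nonzero because $|a|<2$. So $W_i$ has signature $(l+1,l-1)$. When the form has signature $(0,1)$, the matrix $-G$ gives signature $(l-1,l+1)$.

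Finally, signatures add under orthogonal sums, so $\langle*,*\rangle_l$ has signature
\[\big(r_l(l+1)+s_l(l-1),\;r_l(l-1)+s_l(l+1)\big)=(ln_l+r_l-s_l,\;ln_l+s_l-r_l),\]
using $r_l+s_l=n_l$. The only step needing care is the first one: the lift of the orthogonal Hermitian decomposition to an orthogonal $\sigma$-stable real decomposition, together with the identification of each summand with the model $\RR[x]/p^l$ carrying the Gram matrix $\pm G$. I would settle it by invoking the bijective correspondence of Proposition \ref{5} (from \cite[Theorem 3.3]{Mi69}) on each one-dimensional summand and using its additivity.
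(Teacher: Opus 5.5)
Your proposal is correct and follows the paper's proof: the paper likewise splits $V_{p,\sigma}^{(l)}$ into $r_l$ indecomposable pieces of signature $(l+1,l-1)$ and $s_l$ pieces of signature $(l-1,l+1)$ via Lemma \ref{6}, then adds them to get $r_l(l+1)+s_l(l-1)=ln_l+r_l-s_l$. Your extra step, justifying the orthogonal splitting through the uniqueness in Proposition \ref{5} and the additivity of the trace formula, only makes explicit what the paper takes from its remark after Proposition \ref{5} about positive and negative Jordan blocks.
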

Obviously, $ln_l+r_l-s_l$ and $ln_l+s_l-r_l$ are even integers.

\begin{proof}
By Lemma \ref{6}, the signature $(\alpha_l,\beta_l)$ of $\langle*,*\rangle_l$ satisfies
\[\left\{\begin{array}{l}
\alpha_l=r_l(l+1)+s_l(l-1)\\
\beta_l=r_l(l-1)+s_l(l+1)\\
\alpha_l+\beta_l=2ln_l\\
r_l+s_l=n_l
\end{array}\right.\]
Hence, $\alpha_l=r_l(l+1)+(n_l-r_l)(l-1)=ln_l-n_l+2r_l=ln_l+r_l-s_l$.
\end{proof}

Recall that $\epsilon_{p,\sigma}^{(l)}$ is the geometric multiplicity of the linear map $\sigma|_{V_{p,\sigma}^{(l)}}$. Indeed, this number coincides with $n_l$.

\begin{corollary}\label{8}
Let $p\in I_1(f)$. The signature $(r_p,s_p)$, $r_p+s_p=n_p$ of $\Bigperp_{l\text{ odd}}V_{p,\sigma}^{(l)}$ varies over
\[r_p,s_p\in\left[\frac{n_p}{2}-\sum_{l\text{ odd}}\epsilon_{p,\sigma}^{(l)},\frac{n_p}{2}+\sum_{l\text{ odd}}\epsilon_{p,\sigma}^{(l)}\right]\cap2\ZZ,\quad\frac{n_p}{2}\equiv\sum_{l\text{ odd}}\epsilon_{p,\sigma}^{(l)}(\mo~2).\]
\end{corollary}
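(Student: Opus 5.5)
Here $n_p$ denotes $\dim_{\RR}\Bigperp_{l\text{ odd}}V_{p,\sigma}^{(l)}=\sum_{l\text{ odd}}2l\,\epsilon_{p,\sigma}^{(l)}$. The plan is to sum the contributions of the individual summands $V_{p,\sigma}^{(l)}$, $l$ odd, using Proposition~\ref{7}, and then to let the Hermitian signatures $(r_l,s_l)$ range freely, which Proposition~\ref{5} allows. Since the refined decomposition is orthogonal (\S\ref{s2.2}), the signature of $\Bigperp_{l\text{ odd}}V_{p,\sigma}^{(l)}$ is the sum of the signatures of the summands. With $n_l=\epsilon_{p,\sigma}^{(l)}$, Proposition~\ref{7} gives
\[r_p=\sum_{l\text{ odd}}\bigl(l\epsilon_{p,\sigma}^{(l)}+r_l-s_l\bigr)=\frac{n_p}{2}+\sum_{l\text{ odd}}(2r_l-\epsilon_{p,\sigma}^{(l)}),\]
and symmetrically for $s_p$. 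Writing $E:=\sum_{l\text{ odd}}\epsilon_{p,\sigma}^{(l)}$, this becomes $r_p=\frac{n_p}{2}-E+2\sum_{l\text{ odd}}r_l$.

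Next I determine the range. Each $r_l$ ranges over $0\le r_l\le\epsilon_{p,\sigma}^{(l)}$. Every such value is realized by some Hermitian form of dimension $\epsilon_{p,\sigma}^{(l)}$, and by Proposition~\ref{5} this form yields an isometric space with the prescribed $\RR[x]$-module structure. Hence $R:=\sum r_l$ takes every integer value in $[0,E]$, and so $r_p$ runs over $\frac{n_p}{2}-E+2\{0,1,\dots,E\}$. This set is exactly the set of integers in $[\frac{n_p}{2}-E,\frac{n_p}{2}+E]$ that are congruent to $\frac{n_p}{2}-E$ modulo $2$. The same holds for $s_p$, because $s_p=n_p-r_p$ and the interval is symmetric about $\frac{n_p}{2}$.

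The remaining task is to reconcile this with the statement's "$\cap 2\ZZ$" and the congruence $\frac{n_p}{2}\equiv E\pmod 2$. We have $\frac{n_p}{2}=\sum_{l\text{ odd}}l\,\epsilon_{p,\sigma}^{(l)}\equiv\sum_{l\text{ odd}}\epsilon_{p,\sigma}^{(l)}=E\pmod 2$, since each such $l$ is odd. The endpoints $\frac{n_p}{2}\pm E$ are therefore even, and every attained value is even; this matches the remark after Proposition~\ref{7} that each summand has even signature entries. Conversely, every even integer in the interval has the form $\frac{n_p}{2}-E+2R$ for some $R\in[0,E]$, so the range is exactly the interval intersected with $2\ZZ$.

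The one point that needs care is the surjectivity claim: every signature in the range must come from an actual orthogonal structure on the fixed module. This is exactly what Proposition~\ref{5} provides, since any tuple of Hermitian forms $(h_l)$ produces an isometric space. There is a subtlety in reading the statement. It concerns the signatures that occur as the isometric structure varies on the fixed module, rather than a single fixed $\sigma$, and the word "varies" in the corollary supports this reading.
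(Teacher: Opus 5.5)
Your proof is correct and follows essentially the same argument as the paper. You sum Proposition~\ref{7} over odd $l$ to get $r_p=\frac{n_p}{2}-E+2\sum_{l\text{ odd}} r_l$ with $\sum_{l\text{ odd}} r_l\in\{0,\dots,E\}$, and you use $\frac{n_p}{2}-E=\sum_{l\text{ odd}}(l-1)\epsilon_{p,\sigma}^{(l)}\in2\ZZ$ for the parity, while also spelling out two things the paper leaves implicit: the orthogonality of the summands, which lets you add signatures, and the realizability of every choice of $(r_l)$ via Proposition~\ref{5}.
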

\begin{proof}
By Proposition \ref{7},
\[r_p=\sum_{l\text{ odd}}(ln_l+r_l-s_l)=\frac{n_p}{2}-\sum_{l\text{ odd}}\epsilon_{p,\sigma}^{(l)}+2\sum_{l\text{ odd}}r_l,\]
where $\sum_{l\text{ odd}}r_l$ varies over
\[\left\{0,1,\cdots,\sum_{l\text{ odd}}\epsilon_{p,\sigma}^{(l)}\right\}.\]
Moreover, since $\frac{n_p}{2}=\sum_{l\text{ odd}}ln_l$, we have $\frac{n_p}{2}-\sum_{l\text{ odd}}\epsilon_{p,\sigma}^{(l)}=\sum_{l\text{ odd}}(l-1)n_l\in2\ZZ$.
\end{proof}

\subsection{Criterion for Orthogonal Conjugacy}\label{s3.3}
Let $\sigma,\tau\in\Ort(V)$ be conjugate in $\GL(V)$. From \S\ref{s2.2}, after choosing an $\RR[x]$-module isomorphism $V_{\sigma}\cong V_{\tau}$, we identify the corresponding primary components:
\[V=\Bigperp_{p\in I(f)}\Bigperp_{l\geq1}V_p^{(l)}.\]

For the single case $p\in I_1(f)$, one can prove:
\begin{theorem}\label{9}
Let $\sigma\in\Ort(V)$ and $p\in I_1(f)$. Suppose $\Bigperp_{l\text{ odd}}V_{p,\sigma}^{(l)}$ has signature $(r_p,s_p)$, $r_p+s_p=n_p$. Then, the following statements are equivalent:
\begin{itemize}
\item[(i)] Any $\tau\in\Ort(V)$ such that $\tau|_{V_p}$ is conjugate to $\sigma|_{V_p}$ in $\GL(V_p)$ implies they are conjugate in $\Ort(V_p)$.

\item[(ii)] $V_{p,\sigma}^{(l)}=0$ for all even $l$ and the equation
\[\frac{n_p}{2}+\sum_{l\text{ odd}}\left(2z_{p,l}-\epsilon_{p,\sigma}^{(l)}\right)=r_p\]
has only one solution, where $0\leq z_{p,l}\leq\epsilon_{p,\sigma}^{(l)}$ are integers.
\end{itemize}
\end{theorem}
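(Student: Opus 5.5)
The plan is to reduce everything to the Hermitian invariants of Proposition \ref{5} and then count the admissible signatures using Proposition \ref{7}. Write $V_p=\Bigperp_{l\geq1}V_{p,\sigma}^{(l)}$. For $\tau$ with $\tau|_{V_p}$ conjugate to $\sigma|_{V_p}$ in $\GL(V_p)$, we identify the components $V_{p,\tau}^{(l)}\cong V_{p,\sigma}^{(l)}$ as $\RR[x]$-modules, as in \S\ref{s3.3}. In particular, the ranks $n_l=\epsilon_{p,\sigma}^{(l)}$ coincide for $\sigma$ and $\tau$. By Proposition \ref{5}, $\sigma|_{V_p}$ and $\tau|_{V_p}$ are $\Ort(V_p)$-conjugate if and only if, for every $l$, the Hermitian forms $h_l^\sigma$ and $h_l^\tau$ have the same signature.

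\textbf{(i)$\Rightarrow$(ii).} If some $V_{p,\sigma}^{(l)}\neq0$ with $l$ even, the argument of Proposition \ref{n}, applied inside $V_p$, produces a $\tau$ that is $\GL(V_p)$-conjugate but not $\Ort(V_p)$-conjugate to $\sigma|_{V_p}$. So all even components vanish.

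Now suppose the displayed equation has two distinct solutions $(z_{p,l})$ and $(z'_{p,l})$. Exactly one of them, say $(z_{p,l})$, equals $(r_l)$, where $r_l$ is the positive index of $h_l^\sigma$. This holds by Proposition \ref{7} together with the computation in Corollary \ref{8}, which give $r_p=\frac{n_p}{2}+\sum_{l\text{ odd}}(2r_l-\epsilon_{p,\sigma}^{(l)})$. For each odd $l$, take the Hermitian form $h'_l$ of signature $(z'_{p,l},\,n_l-z'_{p,l})$. Proposition \ref{5} then yields an isometric space $(W,\langle*,*\rangle',\tau_0)$ with the same $\RR[x]$-module structure as $V_p$. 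By Proposition \ref{7}, summed over $l$, the form $\langle*,*\rangle'$ has positive index equal to the left-hand side of the equation at $z'$, which is $r_p$. Hence $W$ is isometric to $V_p$. Transporting $\tau_0$ along such an isometry gives $\tau\in\Ort(V_p)$, extended by $\sigma$ on the orthogonal complement so that $\tau\in\Ort(V)$. This $\tau$ is $\GL(V_p)$-conjugate to $\sigma|_{V_p}$, but for some $l$ the Hermitian signatures differ, so it is not $\Ort(V_p)$-conjugate. This contradicts (i).

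\textbf{(ii)$\Rightarrow$(i).} Let $\tau$ be as in (i). Since $\tau$ preserves the same quadratic space $V_p$ and all components have odd $l$, Proposition \ref{7} applied to $\tau$ shows that the tuple of positive indices $(r_l^\tau)$ solves the same equation with right-hand side $r_p$. The tuple $(r_l^\sigma)$ solves it as well. By uniqueness the two tuples agree, and since $r_l+s_l=n_l$ the negative indices agree too. So all Hermitian signatures agree, and Proposition \ref{5} gives conjugacy in $\Ort(V_p)$.

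The step I expect to need the most care is the realization step in (i)$\Rightarrow$(ii). There one must check that an abstract isometric space built from the prescribed Hermitian data via Proposition \ref{5} embeds into $V_p$ itself, and this uses only that the real signature matches, as computed via Proposition \ref{7}. One must also make sure that conjugacy of the restrictions is the correct notion, with $\tau$ acting as $\sigma$ off $V_p$.
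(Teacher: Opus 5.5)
Your proposal is correct and follows essentially the same route as the paper: both reduce $\Ort(V_p)$-conjugacy to equality of the Hermitian signatures via Proposition~\ref{5} (the paper phrases this through the real signatures of the $V_p^{(l)}$, which by Proposition~\ref{7} carry the same information), and both then identify the admissible signature assignments with the solutions of the equation, your version additionally spelling out the realization step that the paper leaves implicit. One small slip: two distinct solutions need not include $(r_l)$, so argue instead that $(r_l)$ is always a solution, hence non-uniqueness gives some solution $z'\neq(r_l)$, and run your construction with that $z'$.
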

\begin{proof}
By Proposition \ref{n}, the even-degree components must be absent. Since $\sigma$ and $\tau$ are conjugate in $\GL(V)$, there is an $\RR[x]$-module isomorphism
\[\Bigperp_{l\text{ odd}}V_{p,\sigma}^{(l)}\cong\Bigperp_{l\text{ odd}}V_{p,\tau}^{(l)},\]
and we identify both with $\Bigperp_{l\text{ odd}}V_p^{(l)}$. By Proposition \ref{5}, condition (i) holds if and only if the isometry classes of the quadratic spaces
\[V_p^{(1)},V_p^{(3)},V_p^{(5)},\cdots\]
are uniquely determined. Since a non-degenerate real quadratic space is determined by its signature, this is equivalent to the uniqueness of the corresponding signature assignment. By the preceding computation, such signature assignments are in bijection with the integral solutions of the equation in (ii).
\end{proof}

Note that combining these distinct $p\in I_1(f)$ preserves the conclusion of Theorem \ref{9}. Thus, we immediately obtain the following corollary:

\begin{corollary}\label{10}
Let $\sigma\in\Ort(V)$, suppose $\Bigperp_{(p,l)\in I_1^{\text{odd}}}V_{p,\sigma}^{(l)}$ has signature $(r_1,s_1)$, $r_1+s_1=n_1$. Then, the following statements are equivalent:
\begin{itemize}
\item[(i)] For any $\tau\in\Ort(V)$ such that the restrictions of $\sigma$ and $\tau$ to $\Bigperp_{p\in I_1(f)}\Bigperp_{l\geq1}V_{p,\sigma}^{(l)}$ are similar implies that they are orthogonally conjugate.

\item[(ii)] $I_1^{\text{ev}}=\emptyset$ and the equation
\[\frac{n_1}{2}-\epsilon_1+2\sum_{(p,l)\in I_1^{\text{odd}}}z_{p,l}=r_1\]
has only one solution, where $0\leq z_{p,l}\leq\epsilon_{p,\sigma}^{(l)}$ are integers.
\end{itemize}
\end{corollary}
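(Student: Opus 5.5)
The plan is to reduce Corollary \ref{10} to Theorem \ref{9}, applied once for each $p\in I_1(f)$, and then assemble the pieces using the orthogonal decomposition of \S\ref{s2.2}.

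\emph{Structural reduction.} By \cite[Lemma 3.1]{Mi69}, the primary components $V_{p}$ for distinct $p\in I_1(f)$ are mutually orthogonal, since each such $p$ is reciprocal and so $p^*=p$. An $\RR[x]$-isomorphism between the restrictions of $\sigma$ and $\tau$ preserves primary components. Hence an orthogonal conjugacy on $\Bigperp_{p\in I_1(f)}V_p$ is the same thing as a family of orthogonal conjugacies on the individual $V_p$. By Proposition \ref{5}, the isometry class of the whole piece is therefore determined by the tuple of Hermitian forms $(h_{p,l})_{p,l}$, with one form for each $p$ and each $l$.

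\emph{Necessity of $I_1^{\text{ev}}=\emptyset$.} This follows from Proposition \ref{n}, whose argument is local to a single $(p,l)$.

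\emph{Odd part, assuming $I_1^{\text{ev}}=\emptyset$.} Fix the underlying quadratic space $\Bigperp_{(p,l)\in I_1^{\text{odd}}}V_{p}^{(l)}$. It has signature $(r_1,s_1)$, and this signature is fixed because it is the signature of the orthogonal complement of the remaining components. Those remaining components, namely $I_0$ and $I_2$, are unchanged when passing from $\sigma$ to $\tau$ in this restricted statement, so Witt cancellation fixes $(r_1,s_1)$.

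\emph{Realizable $\tau$.} Any choice of signatures $(z_{p,l},\epsilon^{(l)}_{p,\sigma}-z_{p,l})$ for the forms $h_{p,l}$ yields, by Proposition \ref{5}, an isometric space. By Proposition \ref{7}, its positive index is
\[\sum_{(p,l)}\bigl(l\epsilon^{(l)}_{p,\sigma}+2z_{p,l}-\epsilon^{(l)}_{p,\sigma}\bigr)=\frac{n_1}{2}-\epsilon_1+2\sum z_{p,l}.\]
When this equals $r_1$, the resulting space is isometric to the given one, and transporting it yields a $\tau\in\Ort(V)$ that is $\GL$-conjugate to $\sigma$. Conversely, every such $\tau$ arises this way.

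\emph{Conclusion.} Distinct tuples $(z_{p,l})$ give non-isometric Hermitian data, so the corresponding $\tau$ are not $\Ort$-conjugate by Proposition \ref{5}. Hence $\Ort$-uniqueness is equivalent to the equation in (ii) having exactly one solution; $\sigma$ itself always supplies at least one.

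The main subtlety is the difference from Theorem \ref{9}. Signatures are no longer fixed separately on each $V_p$, only their sum $r_1$. This is why the single combined equation appears rather than a separate condition for each $p$. To make this rigorous, I will argue directly with the global count as above, rather than literally invoking Theorem \ref{9} for each $p$.
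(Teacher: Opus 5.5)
Your proposal is correct and is essentially the paper's argument. It runs the proof of Theorem~\ref{9} on the whole $I_1$-part at once, with the underlying quadratic space of that part held fixed, using Propositions~\ref{5}, \ref{7} and \ref{n} together with realizability of every Hermitian signature. Your remark that the solutions must be counted globally, with only the total $r_1$ fixed, rather than by conjoining the per-$p$ conditions of Theorem~\ref{9}, is right; it is exactly what the paper's one-line justification (``combining these distinct $p\in I_1(f)$ preserves the conclusion'') leaves implicit.
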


\section{Factors $x\pm1$}\label{s4}
The approach in this section is similar to that of \S\ref{s3}. Suppose $\sigma\in\Ort(V)$, consider
\[\Bigperp_{(p,l)\in I_0^{\text{odd}}}V_{p,\sigma}^{(l)}\perp\Bigperp_{(p,l)\in I_0^{\text{ev}}}V_{p,\sigma}^{(l)},\]
Since the even part is an orthogonal sum of several copies of hyperbolic planes by Proposition \ref{4}, it suffices to study the subspace of $V=V_{\sigma}$ indexed by $I_0^{\text{odd}}$.
\subsection{A Complete Invariant}\label{s4.1}
Parallel to Proposition \ref{5}, the $(x\pm1)$-primary component can be characterized as follows. Here, we only need to consider the odd-degree part, because the bilinear forms defined on the corresponding quotient spaces that determine the even-degree part are skew-symmetric (see \cite[Theorem 3.4]{Mi69}).

\begin{proposition}[{\cite[Proposition 3.3]{XZ26}} and {\cite[Theorem 3.4]{Mi69}}]\label{11}
Let $\sigma\in\Ort(V)$, suppose $p\in I_0(f)\subseteq\{x+1,x-1\}$. The set of non-degenerate quadratic spaces over $\RR$
\[\{(V_{p,\sigma}^{(l)}/pV_{p,\sigma}^{(l)},b_l):\text{odd }l\geq1\}/\text{isometry}\]
determines the isometric space $(V_{p,\sigma}^{\text{odd}},\sigma|_{V_{p,\sigma}^{\text{odd}}})$ completely, where $V_{p,\sigma}^{\text{odd}}=\Bigperp_{l\text{ odd}}V_{p,\sigma}^{(l)}$.
\end{proposition}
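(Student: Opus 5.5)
The plan is to follow Milnor's approach for the symmetric case, as in the proof of Proposition \ref{5}, adapted to $p=x\mp1$ and odd $l$. Write $\sigma_l:=\sigma|_{V_{p,\sigma}^{(l)}}$ and $V^{(l)}:=V_{p,\sigma}^{(l)}$. By the orthogonal decomposition of \S\ref{s2.2}, the space $V_{p,\sigma}^{\text{odd}}$ is the orthogonal sum of the $V^{(l)}$ over odd $l$. It therefore suffices to show two things for each odd $l$. First, the isometric space $(V^{(l)},\sigma_l)$ determines a non-degenerate symmetric bilinear form $b_l$ on $V^{(l)}/pV^{(l)}$. Second, $(V^{(l)},\sigma_l)$ is determined up to isomorphism by the isometry class of $(V^{(l)}/pV^{(l)},b_l)$.

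\textbf{Step 1: the form $b_l$.} Set $\pi:=\sigma-\sigma^{-1}$, which is skew-adjoint with respect to $\langle*,*\rangle$. On $V^{(l)}$ we have $\pi=\pm\sigma^{-1}(\sigma\mp1)(\sigma\pm1)$ with $\sigma\pm1$ invertible, so $\pi$ generates the same ideal as $p$ and satisfies $\pi^l=0\neq\pi^{l-1}$. Define
\[b_l(\overline u,\overline v):=\langle\pi^{l-1}u,v\rangle .\]
This is well defined because $\langle\pi^{l-1}\pi u',v\rangle=\pm\langle\pi^{l-1}u',\pi v\rangle$ and $\pi^{l-1}\pi=\pi^l=0$. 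It is symmetric because $\langle\pi^{l-1}u,v\rangle=(-1)^{l-1}\langle u,\pi^{l-1}v\rangle$ and $l-1$ is even; this parity is exactly why the even-$l$ case is excluded, since there the form is skew-symmetric. Non-degeneracy follows from freeness of $V^{(l)}$ over $\RR[x]/p^l$: if $b_l(\overline u,\cdot)=0$, then $\pi^{l-1}u$ is orthogonal to $V^{(l)}$, hence zero by non-degeneracy of $V^{(l)}$. This forces $u\in\pi V^{(l)}$.

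\textbf{Step 2: reconstruction.} Given $(V^{(l)},\sigma_l)$ and $(W,\tau_l)$ with the same module structure and isometric forms $b_l$, I would build an orthogonal isomorphism by successive approximation. Choose an $\RR[\pi]$-module isomorphism $\phi:V^{(l)}\to W$ whose reduction mod $\pi$ is an isometry of $b_l$; this is possible since any linear map on the quotients lifts to a module map of free modules. Then correct $\phi$ modulo $\pi^k$ for $k=1,\dots,l$. One can either do this by Hensel-type linear corrections, or more cleanly by constructing an orthogonal $\RR[\pi]$-basis directly. Concretely, pick $u$ with $b_l(\overline u,\overline u)\neq0$. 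Adjust $u$ by elements of $\pi V^{(l)}$ so that $\langle\pi^j u,u\rangle=0$ for $j<l-1$ and $\langle\pi^{l-1}u,u\rangle=b_l(\overline u,\overline u)$. Then the cyclic submodule generated by $u$ is a non-degenerate orthogonal summand, and we induct on the rank.

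\textbf{Main obstacle.} The hardest step will be the normalization $\langle\pi^ju,u\rangle=0$ for even $j<l-1$; the odd $j$ vanish automatically by skew-adjointness. I would first try the substitution $u\mapsto (1+c\pi^{k})u$ with $k$ even. This changes $\langle\pi^{j}u,u\rangle$ by $2c\langle\pi^{j+k}u,u\rangle$ plus higher terms, and the change is solvable because $\langle\pi^{l-1}u,u\rangle\neq0$. The key is to work over $\RR$ with $2$ invertible, and to respect $\sigma$, not merely $\pi$. The latter should be fine because $\sigma$ is recovered from $\pi$ on $V^{(l)}$ via $\sigma=\pm\sqrt{1+\pi^2/4}+\pi/2$, a power series in $\pi$ that terminates on $V^{(l)}$. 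Alternatively, one can simply cite \cite[Theorem 3.4]{Mi69}, where exactly this argument is carried out, and \cite[Proposition 3.3]{XZ26}.
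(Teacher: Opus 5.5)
Your proposal is correct and is essentially the argument the paper relies on. The paper gives no proof of its own here beyond citing Milnor's Theorem 3.4 and Proposition 3.3 of [XZ26], and your construction (the form $b_l$ built from $\pi=\sigma-\sigma^{-1}$, symmetric precisely because $l-1$ is even, followed by splitting off normalized cyclic summands $\RR[\pi]u$ with $\langle\pi^ju,u\rangle=0$ for $j<l-1$) is a faithful self-contained version of that standard argument.

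Two points are worth spelling out:
\begin{itemize}
\item \emph{Order of the corrections.} Use $k=2,4,\dots,l-1$ to kill $\langle\pi^{l-1-k}u,u\rangle$ in turn. The substitution $u\mapsto(1+\gamma\pi^k)u$ changes this coefficient by exactly $2\gamma\langle\pi^{l-1}u,u\rangle$ and leaves every $\langle\pi^{j}u,u\rangle$ with $j>l-1-k$ untouched, so earlier corrections are not undone.
\item \emph{The induction.} Choose generators in the two spaces with equal $b_l$-values, and invoke Witt cancellation for the complements (or simply diagonalize fully and apply Sylvester's law).
\end{itemize}
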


To be more precise, suppose each free $\RR[x]/p^l$-module $V_{p,\sigma}^{(l)}$, $l\geq1$ odd, has rank $n_l$, then each pair $(V_{p,\sigma}^{(l)}/pV_{p,\sigma}^{(l)},b_l)$ has $\RR$-dimension $n_l$ and corresponds to a triple
\[(V_{p,\sigma}^{(l)},\langle*,*\rangle_l,\sigma|_{V_{p,\sigma}^{(l)}})\]
uniquely, where $V_{p,\sigma}^{(l)}$ has $\RR$-dimension $ln_l$, $\sigma|_{V_{p,\sigma}^{(l)}}$ has minimal polynomial $p^l$ and
\[b_l(\overline{u},\overline{v})=\big\langle(\sigma|_{V_{p,\sigma}^{(l)}}-\sigma^{-1}|_{V_{p,\sigma}^{(l)}})^{l-1}u,v\big\rangle_l\]
for $u,v\in V_{p,\sigma}^{(l)}$.

Thus, $\sigma|_{V_{p,\sigma}^{(l)}}$ consists of $n_l$ copies of the Jordan block of size $l$ with elementary divisor $p^l$. If the signature of $b_l$ is $(r_l,s_l)$, $r_l+s_l=n_l$, then there are $r_l$ positive and $s_l$ negative Jordan blocks in $\sigma|_{V_{p,\sigma}^{(l)}}$.

\subsection{Criteria for Signatures}\label{s4.2}
Let $p\in I_0(f)$. Parallel to Lemma \ref{6}, we have:
\begin{lemma}\label{12}
An indecomposable component of $\sigma|_{V_{p,\sigma}^{(l)}}$ with minimal polynomial $p^l$ must lie in
\begin{itemize}
\item $\Ort\left(\frac{l\pm1}{2},\frac{l\mp1}{2}\right)$, if $l\equiv1(\mo~4)$.

\item $\Ort\left(\frac{l\mp1}{2},\frac{l\pm1}{2}\right)$, if $l\equiv3(\mo~4)$.
\end{itemize}
The signature of the orthogonal group where this indecomposable component lies in is also determined by the structure of a $1$-dimensional non-degenerate quadratic space over $\RR$.
\end{lemma}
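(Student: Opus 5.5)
We mirror the proof of Lemma \ref{6}. Since the $(x+1)$ case reduces to the $(x-1)$ case by replacing $\sigma$ with $-\sigma$ (this changes neither the bilinear form nor the Jordan structure), assume $p=x-1$. By Proposition \ref{11} an indecomposable component is $V_{p,\sigma}^{(l)}=\RR[x]/(x-1)^l$ with $n_l=1$, $l$ odd. Its isometric structure is determined by the $1$-dimensional form $b_l$, which has signature $(1,0)$ or $(0,1)$. Replacing $\langle*,*\rangle_l$ by its negative swaps these two cases and swaps the resulting signature $(\alpha,\beta)\mapsto(\beta,\alpha)$. So it suffices to show that when $b_l$ is positive, $\langle*,*\rangle_l$ has signature $(\frac{l+1}{2},\frac{l-1}{2})$ for $l\equiv1\ (\mo~4)$ and $(\frac{l-1}{2},\frac{l+1}{2})$ for $l\equiv3\ (\mo~4)$.

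The key step is to choose a good basis. Set $t=\sigma-\sigma^{-1}$, which is nilpotent on $V_{p,\sigma}^{(l)}$ with $t^{l-1}\neq0$, and which is skew-adjoint: $\langle tu,v\rangle=-\langle u,tv\rangle$. Pick a cyclic generator $e$, so that $\{e,te,\dots,t^{l-1}e\}$ is a basis. Then
\[\langle t^ie,t^je\rangle=(-1)^i\langle e,t^{i+j}e\rangle,\]
which vanishes for $i+j\geq l$ and equals $(-1)^i b_l(\bar e,\bar e)$ for $i+j=l-1$. The Gram matrix is therefore lower anti-triangular, with antidiagonal entries $(-1)^i c$, where $c=b_l(\bar e,\bar e)>0$. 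Its signature equals that of the antidiagonal matrix, obtained for instance by a triangular change of basis, or by the continuity and deformation argument that $\det$ stays nonzero along a path killing the entries below the antidiagonal. The more concrete route, paralleling \cite[Lemma 3.1]{XZ26}, is to modify $e$ inductively so that $\langle e,t^ke\rangle=0$ for $k<l-1$. This is possible because skew-adjointness forces $\langle e,t^ke\rangle=0$ automatically for odd $k$, and the even $k$ can be killed by adding multiples of $t^je$ to $e$.

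With the antidiagonal form in hand, the pairs $\{t^ie,t^{l-1-i}e\}$ for $i\neq\frac{l-1}{2}$ span hyperbolic planes, since the off-diagonal entries are nonzero and the diagonal entries vanish. That accounts for $\frac{l-1}{2}$ positive and $\frac{l-1}{2}$ negative directions. The middle vector $t^{(l-1)/2}e$ has norm $(-1)^{(l-1)/2}c$, which is positive exactly when $l\equiv1\ (\mo~4)$. This gives the claimed signatures and shows they are governed by the $1$-dimensional space $(V/pV,b_l)$.

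The main obstacle is justifying that the lower-order entries below the antidiagonal do not affect the signature. I would handle this by the inductive normalization of $e$ (Gram–Schmidt along the $t$-filtration), or alternatively by noting that the span of $\{t^ie: i>\frac{l-1}{2}\}$ is totally isotropic of dimension $\frac{l-1}{2}$. This isotropic subspace forces $\frac{l-1}{2}$ hyperbolic planes to split off, and what remains is the $1$-dimensional complement spanned by the class of $t^{(l-1)/2}e$.
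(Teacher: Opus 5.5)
Your argument is correct and is essentially the paper's proof. Your basis $\{t^ie\}$ with $t=\sigma-\sigma^{-1}$ is the paper's basis $\{(x-x^{-1})^i\}$ of $\RR[x]/p^l$, and after normalizing $c=1$ your alternating antidiagonal Gram matrix is exactly the paper's $G$. The paper reads the signature off the characteristic polynomial $(x^2-1)^k(x-(-1)^k)$ just as you read it off the hyperbolic pairs and the middle vector. The one difference is that you derive the Gram matrix shape yourself from the skew-adjointness of $t$, and dispose of the extra entries $\langle t^ie,t^je\rangle$ with $i+j<l-1$ (these lie above, not below, the antidiagonal) by normalization, deformation, or the isotropic subspace $t^{(l+1)/2}V$, whereas the paper imports the exact normal form from \cite{XZ26}.
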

\begin{proof}
Without loss of generality, assume $V_{p,\sigma}^{(l)}=\RR[x]/p^l$, it has a $\RR$-basis
\[\{(x-x^{-1})^i:0\leq i\leq l-1\}\]
by \cite[Proposition 3.3]{XZ26}. Similarly to Lemma \ref{6}, the Gram matrix of the symmetric bilinear form given by Proposition \ref{11} under this basis is
\[G=\left(\begin{array}{cccccc}
&&&&&1\\
&&&&-1&\\
&&&1&&\\
&&\cdots&&&\\
&-1&&&&\\
1&&&&&
\end{array}\right)\]
if the signature of $(V_{p,\sigma}^{(l)}/pV_{p,\sigma}^{(l)},b_l)\cong(\RR,b_l)$ is $(1,0)$; and $-G$ if the signature is $(0,1)$. If $l=2k+1$, one can compute the characteristic polynomial of $G$ is $(x^2-1)^k(x-(-1)^k)$; and $(x^2-1)^k(x+(-1)^k)$ for $-G$.
\end{proof}

\begin{proposition}\label{13}
Let $p\in I_0(f)$. In the context of Proposition \ref{11}, if $l$ is odd and $b_l$ has signature $(r_l,s_l)$, then $\langle*,*\rangle_l$ has signature
\begin{itemize}
\item $\left(\frac{ln_l+r_l-s_l}{2},\frac{ln_l+s_l-r_l}{2}\right)$, if $l\equiv1(\mo~4)$.

\item $\left(\frac{ln_l+s_l-r_l}{2},\frac{ln_l+r_l-s_l}{2}\right)$, if $l\equiv3(\mo~4)$.
\end{itemize}
\end{proposition}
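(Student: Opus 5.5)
The plan mirrors the proof of Proposition \ref{7}. First, use the orthogonal decomposition of $V_{p,\sigma}^{(l)}$ into $n_l$ indecomposable components, each a single Jordan block of size $l$ with elementary divisor $p^l$. By Proposition \ref{11} and the discussion after it, we can diagonalize $b_l$ on $V_{p,\sigma}^{(l)}/pV_{p,\sigma}^{(l)}$ and lift an orthogonal basis. This splits $(V_{p,\sigma}^{(l)},\langle*,*\rangle_l,\sigma)$ as an orthogonal sum of $r_l$ blocks whose associated $1$-dimensional form has signature $(1,0)$ and $s_l$ blocks whose form has signature $(0,1)$. The signature is additive over orthogonal sums, so it suffices to know the signature of each of these two kinds of blocks.

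Lemma \ref{12} supplies those signatures, and we read them off from its proof. Write $l=2k+1$. The Gram matrix $G$ has characteristic polynomial $(x^2-1)^k(x-(-1)^k)$, so it has $k$ eigenvalues equal to $-1$ and $k+1$ or $k$ eigenvalues equal to $+1$, depending on the parity of $k$. Concretely:
\begin{itemize}
\item If $l\equiv1\ (\mo~4)$, then $k$ is even, so $G$ has signature $(k+1,k)=\left(\frac{l+1}{2},\frac{l-1}{2}\right)$ and $-G$ has signature $\left(\frac{l-1}{2},\frac{l+1}{2}\right)$.
\item If $l\equiv3\ (\mo~4)$, then $k$ is odd and the roles swap: a positive block has signature $\left(\frac{l-1}{2},\frac{l+1}{2}\right)$ and a negative block has $\left(\frac{l+1}{2},\frac{l-1}{2}\right)$.
\end{itemize}

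Summing over blocks in the case $l\equiv1\ (\mo~4)$ gives the positive index
\[\alpha_l=r_l\frac{l+1}{2}+s_l\frac{l-1}{2}=\frac{l(r_l+s_l)+r_l-s_l}{2}=\frac{ln_l+r_l-s_l}{2}.\]
Since $\alpha_l+\beta_l=ln_l$, the negative index is $\beta_l=\frac{ln_l+s_l-r_l}{2}$. The case $l\equiv3\ (\mo~4)$ is the same computation with $r_l$ and $s_l$ interchanged. This is the same linear system as in Proposition \ref{7}.

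The only delicate step is justifying that $\langle*,*\rangle_l$ really is the orthogonal sum of the forms attached to the individual Jordan blocks. In other words, an orthogonal basis of $(V_{p,\sigma}^{(l)}/pV_{p,\sigma}^{(l)},b_l)$ must lift to a decomposition of $V_{p,\sigma}^{(l)}$ into mutually orthogonal, $\sigma$-stable cyclic submodules. This follows from the uniqueness statement of Proposition \ref{11}. Build the model space $\Bigperp(\RR[x]/p^l,\pm G)$ with $r_l$ plus signs and $s_l$ minus signs. Its quotient form is $\langle1\rangle^{r_l}\perp\langle-1\rangle^{s_l}\cong b_l$, so by Proposition \ref{11} this model is isomorphic as an isometric space to $V_{p,\sigma}^{(l)}$. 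Its signature can then be computed blockwise as above.
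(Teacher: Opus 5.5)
Your proposal is correct and follows the paper's argument: you split $V_{p,\sigma}^{(l)}$ into $r_l$ positive and $s_l$ negative size-$l$ blocks, read each block's signature off Lemma \ref{12}, and sum. Your model-space appeal to the uniqueness in Proposition \ref{11} makes explicit the orthogonal splitting that the paper uses without comment. One small slip: for odd $k$ the matrix $G$ has $k+1$ eigenvalues equal to $-1$ and $k$ equal to $+1$, not ``$k$ eigenvalues equal to $-1$'', although your bulleted block signatures are right.
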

\begin{proof}
By Lemma \ref{12}, if $l\equiv1(\mo~4)$, the signature $(\alpha_l,\beta_l)$ of $\langle*,*\rangle_l$ satisfies
\[\left\{\begin{array}{l}
\alpha_l=r_l\frac{l+1}{2}+s_l\frac{l-1}{2}\\
\beta_l=r_l\frac{l-1}{2}+s_l\frac{l+1}{2}\\
\alpha_l+\beta_l=ln_l\\
r_l+s_l=n_l
\end{array}\right.\]
Hence, $\alpha_l=\frac{ln_l+r_l-s_l}{2}$, $\beta_l=\frac{ln_l+s_l-r_l}{2}$. The $l\equiv3(\mo~4)$ case is similar.
\end{proof}

For $p\in I_0(f)$ and $\chi\in\{1,3\}$, write
\[V_{p,\sigma}^{[\chi]}:=\Bigperp_{l\equiv\chi(\mo~4)}V_{p,\sigma}^{(l)}.\]
Recall that $\epsilon_{p,\sigma}^{(l)}=n_l$ is the geometric multiplicity of the linear map $\sigma|_{V_{p,\sigma}^{(l)}}$. Similarly to Corollary \ref{8}, we have

\begin{corollary}\label{14}
Let $p\in I_0(f)$ and $\chi\in\{1,3\}$. Then the signature $(r_{\chi},s_{\chi})$, $r_{\chi}+s_{\chi}=n_{\chi}$ of $V_{p,\sigma}^{[\chi]}$ varies over
\[r_{\chi},s_{\chi}\in\left[\frac{1}{2}\left(n_{\chi}-\sum_{l\equiv\chi(\mo~4)}\epsilon_{p,\sigma}^{(l)}\right),\frac{1}{2}\left(n_{\chi}+\sum_{l\equiv\chi(\mo~4)}\epsilon_{p,\sigma}^{(l)}\right)\right]\cap\ZZ.\]
\end{corollary}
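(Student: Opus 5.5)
The proof is the same bookkeeping as in Corollary \ref{8}, using Proposition \ref{13} in place of Proposition \ref{7}.

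First treat $\chi=1$. By Proposition \ref{13}, for each $l\equiv1\ (\mo~4)$ with $V_{p,\sigma}^{(l)}\neq0$, the positive index of $\langle*,*\rangle_l$ is
\[\alpha_l=\frac{ln_l+r_l-s_l}{2}=\frac{ln_l-n_l}{2}+r_l,\]
where $r_l$ is the positive index of $b_l$. Summing over all such $l$ and using $n_\chi=\sum_{l\equiv\chi}ln_l$ gives
\[r_1=\frac12\Big(n_1-\sum_{l\equiv1(\mo~4)}\epsilon_{p,\sigma}^{(l)}\Big)+\sum_{l\equiv1(\mo~4)}r_l .\]
Each $r_l$ ranges independently over $\{0,\dots,\epsilon_{p,\sigma}^{(l)}\}$. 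Hence the sum $\sum r_l$ takes every integer value in $\{0,\dots,\sum_{l\equiv1}\epsilon_{p,\sigma}^{(l)}\}$, and $r_1$ takes every integer value in the stated interval.

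The first term is an integer, since $(l-1)n_l$ is even. This is why the answer is intersected with $\ZZ$ rather than $2\ZZ$.

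For $\chi=3$, Proposition \ref{13} swaps the roles of $r_l$ and $s_l$. We get
\[r_3=\frac12\Big(n_3-\sum\epsilon_{p,\sigma}^{(l)}\Big)+\sum s_l,\]
and $\sum s_l$ again ranges over the same full set of integers. The statement for $s_\chi$ follows from $s_\chi=n_\chi-r_\chi$, because the interval is symmetric about $n_\chi/2$.

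Two points need justification. First, the signature of $V_{p,\sigma}^{[\chi]}$ is the sum of the signatures of its summands; this holds because the decomposition into the $V_{p,\sigma}^{(l)}$ is orthogonal, as recalled in \S\ref{s2.2}. Second, every choice of the signatures $(r_l,s_l)$ is actually realised. This follows from Proposition \ref{11}: any non-degenerate forms $b_l$ of the prescribed ranks determine an isometric space. The claim "varies over" should be read in this sense, namely that every value in the interval is attained by some choice of the forms $b_l$.

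I expect no real obstacle. The only step needing care is the sign swap in the $l\equiv3\ (\mo~4)$ case, and the symmetry of the interval about $n_\chi/2$ absorbs it.
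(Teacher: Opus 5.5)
Your proposal is correct and matches the paper's approach: the paper gives no separate proof beyond ``Similarly to Corollary \ref{8}'', meaning the same computation as there. That computation sums the signatures from Proposition \ref{13} over $l\equiv\chi\ (\mo~4)$, with $\sum r_l$ (or $\sum s_l$ when $\chi=3$) ranging over $\{0,\dots,\sum\epsilon_{p,\sigma}^{(l)}\}$. Your treatment of the sign swap for $\chi=3$ via the symmetry of the interval about $n_\chi/2$, and your remarks on orthogonality of the summands and realizability of every choice of $b_l$, supply details the paper leaves implicit.
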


Compared to Theorem \ref{9}, although there is now an additional discussion regarding the two congruence classes $[1]$ and $[3]$ modulo $4$, the treatment of the problem remains the same provided that the signatures of $b_l$ are appropriately adjusted. Thus, we immediately obtain

\begin{theorem}\label{15}
Let $\sigma\in\Ort(V)$ and $p\in I_0(f)$. Suppose $\Bigperp_{l\text{ odd}}V_{p,\sigma}^{(l)}$ has signature $(r_p,s_p)$, $r_p+s_p=n_p$. Then, the following statements are equivalent:
\begin{itemize}
\item[(i)] Any $\tau\in\Ort(V)$ such that $\tau|_{V_p}$ is conjugate to $\sigma|_{V_p}$ in $\GL(V_p)$ implies they are conjugate in $\Ort(V_p)$.

\item[(ii)] The equation
\[\frac{n_p}{2}+\sum_{l\text{ odd}}\left(w_{p,l}-\frac{1}{2}\epsilon_{p,\sigma}^{(l)}\right)=r_p\]
has only one solution, where $0\leq w_{p,l}\leq\epsilon_{p,\sigma}^{(l)}$ are integers.
\end{itemize}
\end{theorem}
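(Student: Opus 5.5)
We follow the proof of Theorem \ref{9}, replacing Proposition \ref{5} by Proposition \ref{11} and Proposition \ref{7} by Proposition \ref{13}. Suppose $\tau\in\Ort(V)$ satisfies $\tau|_{V_p}\sim\sigma|_{V_p}$ in $\GL(V_p)$. Then there is an $\RR[x]$-module isomorphism of the $p$-primary components. This identifies each $V_{p,\sigma}^{(l)}$ with $V_{p,\tau}^{(l)}$ as $\RR[x]/p^l$-modules of rank $n_l=\epsilon_{p,\sigma}^{(l)}$.

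By Proposition \ref{4}, the even-degree pieces $(p,l)\in I_0^{\text{ev}}$ are automatically isomorphic as isometric spaces. So the question reduces to the odd part $V_p^{\text{odd}}$. The orthogonal sum decomposition of \S\ref{s2.2} lets us treat the pieces for different $l$ separately. By Proposition \ref{11}, the isometric space $(V_{p}^{\text{odd}},\sigma)$ is determined by the family of real quadratic spaces $(V_{p}^{(l)}/pV_p^{(l)},b_l)$, $l$ odd, that is, by the signatures $(r_l,s_l)$ with $r_l+s_l=n_l$. Conversely, every choice of $(r_l)_{l\text{ odd}}$ with $0\le r_l\le n_l$ is realised by some triple. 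The only constraint for it to come from some $\tau\in\Ort(V)$ is that the total quadratic space on $V_p^{\text{odd}}$ has signature $(r_p,s_p)$. Then it can be placed isometrically inside $V$ and glued with the unchanged components of $\sigma$.

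Condition (i) therefore holds if and only if the tuple $(r_l)$ is uniquely determined by this constraint. We compute the total signature using Proposition \ref{13}:
\[r_p=\sum_{l\equiv1(4)}\frac{ln_l+r_l-s_l}{2}+\sum_{l\equiv3(4)}\frac{ln_l+s_l-r_l}{2}.\]
Set $w_{p,l}=r_l$ for $l\equiv1\pmod 4$ and $w_{p,l}=s_l$ for $l\equiv3\pmod 4$; in both cases $0\le w_{p,l}\le n_l$. Each summand then equals $\frac{ln_l}{2}+w_{p,l}-\frac{n_l}{2}$. Using $\sum_{l\text{ odd}}ln_l=n_p$, this gives
\[r_p=\frac{n_p}{2}+\sum_{l\text{ odd}}\Bigl(w_{p,l}-\tfrac12\epsilon_{p,\sigma}^{(l)}\Bigr).\]
The map $(r_l)\mapsto(w_{p,l})$ is a bijection between admissible signature assignments and integral solutions of the equation in (ii). Hence uniqueness of the assignment is equivalent to uniqueness of the solution. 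Since $\sigma$ itself furnishes one solution, "exactly one" and "only one" coincide.

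The main obstacle is the realisability step: every solution must actually yield a $\tau\in\Ort(V)$ that is $\GL$-conjugate to $\sigma$. For this we use the existence half of Proposition \ref{11}, which provides a triple for any prescribed $b_l$. We then use Witt cancellation (classification of real quadratic spaces by signature) to obtain an isometry of $V_p^{\text{odd}}$ onto the new space, since both have signature $(r_p,s_p)$. Transporting the new transformation along this isometry and taking the orthogonal sum with $\sigma$ on the complement produces $\tau$. Its Jordan structure equals that of $\sigma$, and it is $\Ort(V_p)$-conjugate to $\sigma$ exactly when the $b_l$ agree.
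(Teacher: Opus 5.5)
Your proposal is correct and takes the paper's route. The paper proves Theorem~\ref{15} by repeating the proof of Theorem~\ref{9} with Propositions~\ref{11} and~\ref{13} in place of Propositions~\ref{5} and~\ref{7}, and its ``appropriately adjusted'' signatures of $b_l$ are exactly your substitution $w_{p,l}=r_l$ for $l\equiv1\pmod 4$ and $w_{p,l}=s_l$ for $l\equiv3\pmod 4$; your explicit realisability step via Witt cancellation and gluing spells out what the paper leaves implicit.
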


\begin{corollary}\label{16}
Let $\sigma\in\Ort(V)$, suppose $\Bigperp_{(p,l)\in I_0^{\text{odd}}}V_{p,\sigma}^{(l)}$ has signature $(r_0,s_0)$, $r_0+s_0=n_0$. Then, the following statements are equivalent:
\begin{itemize}
\item[(i)] For any $\tau\in\Ort(V)$ such that the restrictions of $\sigma$ and $\tau$ to $\Bigperp_{(p,l)\in I_0^{\text{odd}}}V_{p,\sigma}^{(l)}$ are similar implies that they are orthogonally conjugate.

\item[(ii)] The equation
\[\frac{n_0-\epsilon_0}{2}+\sum_{(p,l)\in I_0^{\text{odd}}}w_{p,l}=r_0\]
has only one solution, where $0\leq w_{p,l}\leq\epsilon_{p,\sigma}^{(l)}$ are integers.
\end{itemize}
\end{corollary}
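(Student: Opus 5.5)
The statement to prove is Corollary \ref{16}. I would derive it from Theorem \ref{15}, exactly as Corollary \ref{10} was obtained from Theorem \ref{9}, but aggregating over both $p=x-1$ and $p=x+1$ and over all odd $l$.

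\textbf{Step 1: rewrite the equation.} By Proposition \ref{11}, the isometric space on $V_0^{\text{odd}}:=\Bigperp_{(p,l)\in I_0^{\text{odd}}}V_{p,\sigma}^{(l)}$ is determined up to isomorphism by the family of quadratic spaces $(V_{p,\sigma}^{(l)}/pV_{p,\sigma}^{(l)},b_l)$. Since each $b_l$ is classified by its signature, this means by the tuple $(r_{p,l})_{(p,l)\in I_0^{\text{odd}}}$ with $0\le r_{p,l}\le\epsilon_{p,\sigma}^{(l)}$. Conversely, every such tuple is realized by some isometric space, using the Gram matrices $\pm G$ of Lemma \ref{12} on each Jordan block.

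By Proposition \ref{13}, the signature of $V_{p,\sigma}^{(l)}$ has first entry
\[\frac{ln_l}{2}\pm\Bigl(r_{p,l}-\frac{n_l}{2}\Bigr),\]
with sign $+$ if $l\equiv1$ and $-$ if $l\equiv3 \pmod 4$. Set $w_{p,l}=r_{p,l}$ in the first case and $w_{p,l}=s_{p,l}=n_l-r_{p,l}$ in the second. In both cases the first entry equals $\frac{ln_l-n_l}{2}+w_{p,l}$, and $w_{p,l}$ ranges over $[0,\epsilon_{p,\sigma}^{(l)}]$ bijectively with $r_{p,l}$. Summing over $(p,l)\in I_0^{\text{odd}}$ and using $n_0=\sum ln_l$, $\epsilon_0=\sum n_l$, the positive index of $V_0^{\text{odd}}$ is
\[\frac{n_0-\epsilon_0}{2}+\sum_{(p,l)\in I_0^{\text{odd}}} w_{p,l}.\]

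\textbf{Step 2: show (ii) $\Rightarrow$ (i).} Let $\tau$ restrict on the corresponding subspace to a map similar to $\sigma$. Identify the $\RR[x]$-modules as in \S\ref{s3.3}. The $\tau$-decomposition gives another tuple $(w'_{p,l})$. Because $\tau$ preserves the same ambient form, the sub-space has the same signature $(r_0,s_0)$, so $(w'_{p,l})$ also solves the equation in (ii). The subtle point is that the ambient signature fixes the signature of this orthogonal summand; this holds because the complementary summands are isomorphic for $\sigma$ and $\tau$ by Proposition \ref{4} and the hypothesis, and by Witt cancellation. Uniqueness of the solution then forces $w'=w$, hence equal $b_l$-signatures. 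Proposition \ref{11} then yields an isomorphism of isometric spaces on each $p$-component, and these assemble orthogonally into an orthogonal conjugacy.

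\textbf{Step 3: show (i) $\Rightarrow$ (ii).} Suppose there is a second solution $w'\neq w$. Build the isometric space with $b_l$-signatures corresponding to $w'$. Its underlying quadratic space has the same dimension and, by Step 1, the same positive index $r_0$, so it is isometric to $V_0^{\text{odd}}$. Transport the new transformation to $V_0^{\text{odd}}$ and extend by $\sigma$ on the orthogonal complement; this gives $\tau\in\Ort(V)$ that is $\GL$-conjugate to $\sigma$. Its invariants differ from those of $\sigma$, so by the uniqueness part of Proposition \ref{11} (the $b_l$ are invariants of the isometric space) it is not orthogonally conjugate to $\sigma$.

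\textbf{Main obstacle.} I expect the main care to be needed in the sign bookkeeping of Step 1 for $l\equiv3\pmod 4$, where the substitution $w=s$ is essential. The justification that the $b_l$ are genuine invariants is the other delicate point, and I would take it from Proposition \ref{11}.
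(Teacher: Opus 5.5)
Your route is the paper's: Proposition \ref{13} block by block, with the substitution $w_{p,l}=s_{p,l}$ for $l\equiv3\pmod 4$. That substitution is the ``appropriate adjustment'' behind Theorem \ref{15}, and it is also the positive index of $q_0$ in Corollary \ref{h}. You then use completeness and realizability from Proposition \ref{11}, and Steps 1 and 3 are fine.

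\textbf{The gap is the ``subtle point'' in Step 2.} The orthogonal complement of the $I_0^{\text{odd}}$-summand contains the $I_1^{\text{odd}}$-components. Proposition \ref{4} does not cover these, and the hypothesis of (i) says nothing about them. So the complements for $\sigma$ and $\tau$ need not be isometric, and Witt cancellation gives nothing. Here is a concrete counterexample:
\begin{itemize}
\item[(a)] Let $V=P\perp N$ with $P$ of signature $(2,0)$ and $N$ of signature $(0,2)$. Fix a rotation $R$ by an angle $\theta\in(0,\pi)$, and put $\sigma=R|_P\perp\mathrm{id}_N$ and $\tau=\mathrm{id}_P\perp R|_N$.
\item[(b)] These are $\GL(V)$-conjugate, and their $(x-1)$-components, $N$ and $P$ respectively, carry similar restrictions.
\item[(c)] These components have signatures $(0,2)$ and $(2,0)$, so the restrictions are not orthogonally conjugate.
\item[(d)] Yet the equation in (ii), with $n_0=\epsilon_0=2$ and $r_0=0$, has the unique solution $w_{x-1,1}=0$.
\end{itemize}
So if $\tau$ ranges over all of $\Ort(V)$ with only the restrictions assumed similar, (ii)$\Rightarrow$(i) is false, and no argument can close this step.

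The statement has to be read as the paper reads Theorem \ref{15}(i), with conjugacy taken in $\Ort(V_p)$. That is, $\tau$ is an isometry of the same quadratic space $V_0^{\text{odd}}$, of signature $(r_0,s_0)$, with the same $\RR[x]$-module structure. Under that reading, equality of signatures is a hypothesis, your Witt-cancellation sentence should be deleted, and the rest of Step 2 goes through.

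If you want to keep the Witt argument instead, add the hypothesis that the $I_1^{\text{odd}}$-parts of $\sigma$ and $\tau$ have equal signatures. The complements are then isometric: use Proposition \ref{4} for the $I_2$- and $I_0^{\text{ev}}$-parts, and note that the $I_1^{\text{ev}}$-parts are hyperbolic of equal dimension.

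Either way, the trade of signature between the $I_0^{\text{odd}}$- and $I_1^{\text{odd}}$-parts in the example is genuine. It is exactly what the single coupled equation of Theorem \ref{1} records: in the example it reads $w+2z=2$, whose two solutions $(0,1)$ and $(2,0)$ are $\sigma$ and $\tau$. So it cannot be argued away at the level of Corollary \ref{16}.
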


\section{The Main Theorem}\label{s5}
The proof of the main theorem is immediately now:

\begin{proof}
(of Theorem \ref{1}) By Corollary \ref{10} and Corollary \ref{16}, the orthogonal conjugacy class is completely determined by these local signatures; conversely every admissible assignment is realizable. Hence the orthogonal conjugacy classes in the similarity class of $\sigma$ are in bijection with the integral solutions of the equation in Theorem \ref{1}.
\end{proof}

We can completely characterize the cases where the equation possesses a unique solution. For $(t,l)\in I_0^{\text{odd}}$, let $b_{t,l}$ denote the symmetric bilinear form associated with $V_{t,\sigma}^{(l)}$. For $(p,l)\in I_1^{\text{odd}}$, let $h_{p,l}$ denote the Hermitian form associated with $V_{p,\sigma}^{(l)}$. Define
\[q:=q_0\perp q_1,\]
where
\[q_0:=\Bigperp_{(t,l)\in I_0^{\text{odd}}}(-1)^{(l-1)/2}b_{t,l}\quad\text{and}\quad q_1:=\Bigperp_{(p,l)\in I_1^{\text{odd}}}\tr_{\CC/\RR}(h_{p,l}).\]

\begin{corollary}\label{h}
The equation in Theorem \ref{1} has exactly one solution if and only if at least one of the following conditions holds:
\begin{itemize}
  \item[(i)] $\#I_0^{\text{odd}}+\#I_1^{\text{odd}}\leq1$.
  \item[(ii)] $q$ is (positive or negative) definite.
  \item[(iii)] $\#I_0^{\text{odd}}=1$, $q_0$ is $1$-dimensional, and either $\#I_1^{\text{odd}}=1$ or $q_1$ is definite.
  \item[(iv)] $\#I_0^{\text{odd}}=1$, $q_0$ is a hyperbolic plane, and either $\#I_1^{\text{odd}}=1$ or $q_1$ is definite.
  \item[(v)] $\#I_0^{\text{odd}}=1$, and if $(r_0,s_0)$ denotes the signature of $q_0$, then one of the following two cases occurs:
  \begin{itemize}
    \item $r_0\geq 2$, $s_0=1$, and $q_1$ is positive definite.
    \item $r_0=1$, $s_0\geq2$, and $q_1$ is negative definite.
  \end{itemize}
\end{itemize}
Consequently, every $\tau\in\Ort(V)$ which is conjugate to $\sigma$ in $\GL(V)$ is conjugate to $\sigma$ in $\Ort(V)$ if and only if $I_1^{\text{ev}}=\emptyset$ and at least one of (i)-(v) holds.
\end{corollary}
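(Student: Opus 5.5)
The plan is to turn uniqueness of the solution into a combinatorial statement about integer points in a box. Then I translate the coordinates of the distinguished solution into signature data of $q_0$ and $q_1$.

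\emph{Step 1: identify the solution attached to $\sigma$.} I will first check that $\sigma$ itself gives a solution of the equation in Theorem \ref{1}, namely
\begin{itemize}
\item $w_{t,l}=$ the positive index of $(-1)^{(l-1)/2}b_{t,l}$, using Proposition \ref{13};
\item $z_{p,l}=$ the positive index $r_l$ of $h_{p,l}$, using Proposition \ref{7}.
\end{itemize}
Consequently $q_0$ has signature $\bigl(\sum w_{t,l},\sum(\epsilon_{t,\sigma}^{(l)}-w_{t,l})\bigr)$. Similarly $q_1=\Bigperp\tr_{\CC/\RR}(h_{p,l})$ has signature $\bigl(2\sum z_{p,l},2\sum(\epsilon_{p,\sigma}^{(l)}-z_{p,l})\bigr)$. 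So each variable sits at its lower bound, its upper bound, or strictly inside, and this is readable from definiteness of the corresponding summand of $q$.

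\emph{Step 2: reduce uniqueness to the absence of elementary moves.} I will prove the following lemma. The known solution $x$ is the unique point of the box $\prod[0,\epsilon]$ on the hyperplane $\sum w+2\sum z=\mathrm{const}$ if and only if none of these moves stays in the box:
\begin{itemize}
\item[(a)] $+1$ on one $w$-variable and $-1$ on another $w$-variable;
\item[(b)] $+1$ on one $z$-variable and $-1$ on another $z$-variable;
\item[(c)] $-1$ on a $z$-variable, together with a total of $+2$ distributed over $w$-variables (on one variable or split between two), or the same move with all signs reversed.
\end{itemize}
For the nontrivial direction, take a second solution $y$ and set $d=y-x\neq0$, so that $\sum d_w+2\sum d_z=0$. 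I will extract a conformal elementary move from $d$. If $d$ has a positive and a negative $w$-coordinate, or a positive and a negative $z$-coordinate, move (a) or (b) applies. Otherwise all $w$-coordinates of $d$ have one sign and all $z$-coordinates have the opposite sign. Since the $w$-part of $d$ then equals $\mp2$ times the $z$-part, its total is even and at least $2$ in absolute value, so move (c) can be carved out. Each such move stays in the box because it is dominated coordinatewise by $d$, and $x+d=y$ lies in the box.

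\emph{Step 3: translate into conditions on $q$ and split into cases.} In terms of Step 1, the obstructions read as follows.
\begin{itemize}
\item Move (a) fails exactly when $\#I_0^{\text{odd}}\le1$, or when all $w$-variables are simultaneously at their upper bounds or simultaneously at their lower bounds, i.e.\ $q_0$ is definite.
\item Move (b) fails exactly when $\#I_1^{\text{odd}}\le1$ or $q_1$ is definite.
\item Move (c) fails exactly when either there is no room for the $w$-part (the $w$-variables cannot absorb a net $+2$) or there is no room for the $z$-part, and likewise with signs reversed.
\end{itemize}
I will then split into cases according to $\#I_0^{\text{odd}}$ and $\#I_1^{\text{odd}}$.
\begin{itemize}
\item If one of the two index sets is empty, only (a) or only (b) matters, which gives (i) or (ii).
\item If both are nonempty and $\#I_0^{\text{odd}}\ge2$, then (a) forces $q_0$ to be definite. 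Combining this with (c) should force $q_1$ to be definite of the same sign, which is (ii).
\item If $\#I_0^{\text{odd}}=1$, the single $w$-variable has net room $+2$ upward exactly when $s_0\ge2$, and room $-2$ downward exactly when $r_0\ge2$. Excluding (c) in both directions then produces the subcases with $q_0$ of dimension $1$, $q_0$ a hyperbolic plane, or the asymmetric signatures $(r_0\ge2,1)$ and $(1,s_0\ge2)$ paired with a $q_1$ of the matching definiteness. These are (iii)--(v).
\end{itemize}
The final sentence of the corollary then follows from Theorem \ref{1}.

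The main obstacle is the bookkeeping in Step 3 when $\#I_0^{\text{odd}}=1$. There I must check carefully that the listed conditions are exhaustive, in particular in the mixed cases where $q_1$ is indefinite but $\#I_1^{\text{odd}}=1$. I must also match the sign conventions $(-1)^{(l-1)/2}$ against the direction of move (c). I would handle this by writing out the four boundary patterns of $(r_0,s_0)$ (namely $\min(r_0,s_0)=0$, $r_0=s_0=1$, $\min=1<\max$, and $\min\ge2$) against the three states of $q_1$ (positive definite, negative definite, indefinite).
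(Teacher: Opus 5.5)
Your proposal is correct and follows essentially the paper's route: your moves (a), (b), (c) are exactly the paper's conditions \eqref{e2}, \eqref{e3}, \eqref{e4}, and your case split by $\#I_0^{\text{odd}}$ and $(r_0,s_0)$ mirrors the paper's. Your conformal-move lemma in Step 2 makes explicit what the paper leaves implicit when it passes to the aggregate pair $(W,Z)$. One bookkeeping point: when $\#I_0^{\text{odd}}=1$, the pattern $\min(r_0,s_0)=0$, $\max(r_0,s_0)\geq2$ yields uniqueness exactly when $q$ is definite, which is (ii) rather than (iii)--(v). The paper avoids listing this case separately by assuming at the outset that (ii) fails.
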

\begin{proof}
Write
\[W:=\sum_{(t,l)\in I_0^{\text{odd}}}w_{t,l},\quad\text{and}\quad Z:=\sum_{(p,l)\in I_1^{\text{odd}}}z_{p,l}.\]
By the definition of $q_0$ and $q_1$, their signatures are
\[(W,\epsilon_0-W)\quad\text{and}\quad(2Z,2\epsilon_1-2Z),\]
respectively. Hence the equation in Theorem \ref{1} becomes
\begin{align}\label{e1}W+2Z=r-\frac{n}{2}+\frac{\epsilon_0}{2}+\epsilon_1.\end{align}

We first record two elementary observations. For a fixed $W$, the integers $\{w_{t,l}:(t,l)\in I_0^{\text{odd}}\}$ are uniquely determined if and only if
\begin{align}\label{e2}\#I_0^{\text{odd}}\leq1\quad\text{or}\quad W\in\{0,\epsilon_0\}.\end{align}
Indeed, if $\#I_0^{\text{odd}}\geq2$ and $0<W<\epsilon_0$, one can redistribute one unit between two components while preserving their sum. Similarly, for a fixed $Z$, the integers $\{z_{p,l}:(p,l)\in I_1^{\text{odd}}\}$ are uniquely determined if and only if
\begin{align}\label{e3}\#I_1^{\text{odd}}\leq1\quad\text{or}\quad Z\in\{0,\epsilon_1\}.\end{align}

These observations show immediately that conditions (i) and (ii) imply uniqueness. Conversely, suppose that the equation has exactly one solution and that neither (i) nor (ii) holds. We claim that $\#I_0^{\text{odd}}=1$. Indeed, if $I_0^{\text{odd}}=\emptyset$, then $\#I_1^{\text{odd}}\geq2$, by \eqref{e3} we have $Z=0$ or $Z=\epsilon_1$. Thus $q=q_1$ is definite, contrary to our assumption. Suppose instead that $\#I_0^{\text{odd}}\geq2$, by \eqref{e2} we have $W=0$ or $W=\epsilon_0$. Assume first that $W=\epsilon_0$. Since $q$ is not positive definite, $Z<\epsilon_1$. Moreover, $\epsilon_0\geq2$, so one may decrease the total $W$ by $2$ and increase $Z$ by $1$. This preserves \eqref{e1} and yields another admissible solution, a contradiction. The case $W=0$ is analogous. This proves the claim.

Now, let $(t,l)$ be the unique element of $I_0^{\text{odd}}$, and write $w=w_{t,l}$. Then $\epsilon_0=\epsilon_{t,\sigma}^{(l)}$ and $q_0$ has signature $(w,\epsilon_0-w)$. Since neither (i) nor (ii) holds, we also have $\#I_1^{\text{odd}}\geq1$. The pair $(w,Z)$ is uniquely determined by \eqref{e1} precisely when neither of the moves
\[(w,Z)\longmapsto(w-2,Z+1),\quad(w,Z)\longmapsto(w+2,Z-1)\]
is admissible. Equivalently,
\begin{align}\label{e4}Z=\epsilon_1\text{ or }w\leq1,\quad Z=0\text{ or }w\geq\epsilon_0-1.\end{align}
Together with \eqref{e3}, these conditions completely characterize uniqueness.

\begin{itemize}
\item If $\epsilon_0=1$, both conditions in \eqref{e4} are automatic. Hence uniqueness is equivalent to
\[\#I_1^{\text{odd}}=1\quad\text{or}\quad Z\in\{0,\epsilon_1\},\]
that is, either $\#I_1^{\text{odd}}=1$ or $q_1$ is definite. This is exactly (iii).

\item If $\epsilon_0=2$, then \eqref{e4}, together with the failure of (ii), forces $w=1$. Hence, the signature of $q_0$ is $(1,1)$, this is a hyperbolic plane. Again, by \eqref{e3}, uniqueness is equivalent to $\#I_1^{\text{odd}}=1$ or $q_1$ being definite. This is (iv).

\item Finally, suppose $\epsilon_0\geq3$. If $0<Z<\epsilon_1$, then \eqref{e4} would give simultaneously
\[w\leq1\quad\text{and}\quad w\geq\epsilon_0-1,\]
which is impossible. Thus $Z=0$ or $Z=\epsilon_1$.
\begin{itemize}
\item If $Z=\epsilon_1$, then $q_1$ is positive definite and \eqref{e4} implies $w\geq\epsilon_0-1$. The case $w=\epsilon_0$ would make $q$ positive definite, which is excluded by (ii). Hence $w=\epsilon_0-1$, and therefore the signature of $q_0$ is $(\epsilon_0-1,1)$. Thus $r_0\geq2$, $s_0=1$, and $q_1$ is positive definite.

\item If $Z=0$, then $q_1$ is negative definite and \eqref{e4} implies $w\leq1$. The case $w=0$ would make $q$ negative definite, again falling under (ii). Therefore $w=1$, so the signature of $q_0$ is $(1,\epsilon_0-1)$. Thus $r_0=1$, $s_0\geq2$, and $q_1$ is negative definite.
\end{itemize}
These are precisely the two alternatives in (v).
\end{itemize}
Therefore the equation in Theorem \ref{1} has exactly one integral solution if and only if one of (i)-(v) holds.
\end{proof}

Another advantage of Theorem \ref{1} is that it enables the direct computation of the number of orthogonal conjugacy classes.

\begin{corollary}
We have
\[\#\frac{\{\tau\in\Ort(V):\tau\text{ is similar to }\sigma\}}{\Ort(V)\text{-conjugacy}}=N\cdot\prod_{(p,l)\in I_1^{\text{ev}}}(\epsilon_{p,\sigma}^{(l)}+1),\]
where $N$ is the number of the solutions of the equation in Theorem \ref{1}.
\end{corollary}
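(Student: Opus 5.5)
The plan is to count the $\Ort(V)$-conjugacy classes inside the $\GL(V)$-conjugacy class of $\sigma$ by parametrizing them through the complete invariants of \S\ref{s2}--\S\ref{s4}, and then to separate the count into an \enquote{odd part} and an \enquote{even part}.

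First, fix an $\RR[x]$-module identification $V_\sigma\cong V_\tau$ for every $\tau$ similar to $\sigma$, and use the orthogonal decomposition of \S\ref{s2.2}. Two such $\tau,\tau'$ are $\Ort(V)$-conjugate exactly when their restrictions to each block $V^{(l)}_{p}$ are isomorphic as isometric spaces. The blocks indexed by $I_2(f)$ and $I_0^{\text{ev}}$ impose no extra data, by Proposition \ref{4}. The remaining blocks carry two kinds of invariants:
\begin{itemize}
\item the odd part is governed by the forms $b_{t,l}$ and $h_{p,l}$, by Propositions \ref{5} and \ref{11};
\item each $(p,l)\in I_1^{\text{ev}}$ is governed by the signature of a Hermitian form $h_l$ of dimension $\epsilon_{p,\sigma}^{(l)}$, and such signatures range over $\epsilon_{p,\sigma}^{(l)}+1$ values.
\end{itemize}
Hence the set of classes injects into tuples (odd signature data, even Hermitian signatures).

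Second, I determine which tuples actually occur, i.e.\ prove surjectivity onto the claimed set. For the even $I_1$ blocks, any signature is allowed. By Lemma \ref{6} the underlying real space of each such block has signature $(l\epsilon,l\epsilon)$, regardless of the choice, so changing $h_l$ never changes the ambient signature. This is the same observation used in Proposition \ref{n}. For the odd blocks, a choice of signatures $(w_{t,l})$ and $(z_{p,l})$ produces a total signature computed by Propositions \ref{7} and \ref{13}. Summing these, with the $\pm$ sign twist for $l\equiv 3\ (\mo 4)$ absorbed into the variable $w$ exactly as in Corollary \ref{16}, shows that the total positive index of the odd part equals the left-hand side of the equation in Theorem \ref{1}.

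The assembled space is isometric to $V$, and hence realizable by some $\tau\in\Ort(V)$, iff its signature equals that of $V$. Since the even and $I_2$ parts have signature fixed independently of the choices, this is iff that left-hand side equals $r$. Thus the admissible odd data are exactly the $N$ solutions. Any such realization gives an isometry $V'\cong V$, and transporting along it yields $\tau\in\Ort(V)$ similar to $\sigma$.

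Combining the two steps, the classes are in bijection with the product of the solution set and $\prod_{(p,l)\in I_1^{\text{ev}}}\{0,\dots,\epsilon_{p,\sigma}^{(l)}\}$, which gives the formula. The main point to check carefully is the independence claim: the choice of even-block Hermitian signatures must not interact with the signature constraint, and distinct Hermitian signatures on even blocks must give non-conjugate $\tau$. The first is Lemma \ref{6} in the even case. The second follows from the completeness in Proposition \ref{5}, because the Hermitian form $h_l$ is an $\Ort$-conjugacy invariant of $\sigma|_{V_p}$.
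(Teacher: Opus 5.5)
Your proposal is correct and follows essentially the same route as the paper. Conjugacy classes are parametrized by the complete invariants of Propositions \ref{4}, \ref{5} and \ref{11}; the odd data indexed by $I_0^{\text{odd}}\cup I_1^{\text{odd}}$ are cut down to the $N$ solutions by the signature constraint; and each $(p,l)\in I_1^{\text{ev}}$ contributes an independent factor $\epsilon_{p,\sigma}^{(l)}+1$, because its underlying real space is hyperbolic for every choice of Hermitian signature. The paper delegates the realizability step to the bijection asserted in the proof of Theorem \ref{1}, whereas you spell it out: you assemble an abstract isometric space, compare its signature with that of $V$, and transport along an isometry.
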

\begin{proof}
By the bijection established in the proof of Theorem \ref{1}, the components indexed by $I_0^{\text{odd}}\cup I_1^{\text{odd}}$ contribute exactly $N$ orthogonal conjugacy classes. For $(p,l)\in I_1^{\text{ev}}$. The associated Hermitian form has $\CC$-dimension $\epsilon_{p,\sigma}^{(l)}$ and is determined up to isometry by its signature. Therefore, it has exactly $\epsilon_{p,\sigma}^{(l)}+1$ possible isometry classes. Since $l$ is even, the underlying real quadratic space is hyperbolic for every such choice, so these choices do not affect the quadratic space $V$ and are independent of the choices on the other components. The components indexed by $I_2(f)$ and $I_0^{\text{ev}}$ contribute no additional invariants. Hence, by the orthogonal decomposition and the completeness of the above invariants, we get the counting formula.
\end{proof}

Here we provide a computable example to illustrate our theorem.

\begin{example}
Let $(V,\langle*,*\rangle)$ be a $2$-dimensional non-degenerate quadratic space over $\RR$. If the signature is $(2,0)$ or $(0,2)$, since the Gram-Schmidt process can be applied here, all matrices within the similarity class are mutually orthogonally similar. Consequently, Theorem \ref{1} is trivial in this case. For the signature $(1,1)$, fix a suitable basis, one can assume
\[\Ort(V)=\left\{A\in\GL_2(\RR):A\left(\begin{array}{cc}1&0\\0&-1\\\end{array}\right)A^T=\left(\begin{array}{cc}1&0\\0&-1\\\end{array}\right)\right\}.\]
More precisely, $\Ort(V)$ is equal to
\[\left\{A_{\pm}(t)=\left(\begin{array}{cc}\pm\sqrt{1+t^2}&t\\t&\pm\sqrt{1+t^2}\\\end{array}\right):t\in\RR\right\}\cup\left\{B_{\pm}(t)=\left(\begin{array}{cc}\pm\sqrt{1+t^2}&t\\-t&\mp\sqrt{1+t^2}\\\end{array}\right):t\in\RR\right\}.\]
The left part $A_{\pm}(t)$ has characteristic polynomial $(x-(\pm\sqrt{1+t^2}+t))(x-(\pm\sqrt{1+t^2}-t))$ and canonical form
\[\left(\begin{array}{cc}\pm\sqrt{1+t^2}+t&0\\0&\pm\sqrt{1+t^2}-t\\\end{array}\right)\]
in $\GL(V)$; the right part $B_{\pm}(t)$ has characteristic polynomial $(x+1)(x-1)$ and canonical form
\[\left(\begin{array}{cc}1&0\\0&-1\\\end{array}\right)\]
in $\GL(V)$. We treat the following cases separately.
\begin{itemize}
\item $A_{\pm}(t_1)\sim A_{\pm}(t_2)$ in $\GL(V)$ implies $t_1=\pm t_2$. The only non-trivial case is $t_1=-t_2$, the transition matrix in this case is $B_{\pm}(0)$. This situation corresponds to the factors in $I_2(f)$, of course, we need $t_1\neq0$.

\item $A_{\pm}(t_1)\sim A_{\mp}(t_2)$ in $\GL(V)$ is impossible.

\item Note that $B_+(0)$ is not conjugate to $B_-(0)$ in $\Ort(V)$. Moreover, any $B_+(t)$ is conjugate to $B_+(0)$ by transition matrices $A_{\pm}\left(\frac{1}{\sqrt{2}}\sqrt{\sqrt{t^2+1}-1}\right)$ or $B_+\left(\frac{1}{\sqrt{2}}\sqrt{\sqrt{t^2+1}-1}\right)$ in $\Ort(V)$; similarly, any $B_-(t)$ is also conjugate to $B_-(0)$ in $\Ort(V)$. This situation corresponds to the factors in $I_0(f)$.
\end{itemize}
Hence, a complete set of representatives for the $\Ort(V)$-conjugacy classes is
\[\{A_+(t)(t\geq0),\quad A_-(t)(t\geq0),\quad B_+(0),\quad B_-(0)\}.\]
It is worth comparing this conclusion with \cite[Theorem 2.1]{Mi69}. Note that $B_+(0)$ and $B_-(0)$ are not conjugate in $\Ort(V)$, this can also be derived from Theorem \ref{1} by setting $(r,s)=(1,1)$; $\epsilon_0=2$, $\epsilon_1=0$; $\epsilon_{x\pm1,\sigma}^{(1)}=1$, $\epsilon_{x\pm1,\sigma}^{(\geq3)}=0$; and $\epsilon_{p,\sigma}^{(\geq1)}=0$ for $p\in I_1(f)$. In fact, the equation in Theorem \ref{1} now becomes $w_{x+1,1}+w_{x-1,1}=1$, which has two solutions.
\end{example}

\end{document}